\newtheorem*{maintheorem*}{Main Theorem}
\newtheorem{theorem}{Theorem}[section]
\newtheorem{proposition}[theorem]{Proposition}
\newtheorem{lemma}[theorem]{Lemma}
\newtheorem{cor}[theorem]{Corollary}
\theoremstyle{definition}
\newtheorem{definition}[theorem]{Definition}
\newtheorem{example}[theorem]{Example}
\newtheorem{remark}[theorem]{Remark}
\numberwithin{equation}{section}
\newcommand\nn{\mathbb{N}}
\newcommand\qq{\mathbb{Q}}
\newcommand\rr{\mathbb{R}}
\newcommand\zz{\mathbb{Z}}
\newcommand\pval{\mathsf{v}_p}
\keywords{Archimedean field, atomicity, BF-monoid, FF-monoid, ordered field, positive monoid, Puiseux monoid}
\subjclass[2010]{Primary: 12J15, 20M13; Secondary: 06F05, 13A05}
\begin{document}

	\mbox{}
	\title{Increasing Positive Monoids of \\ Ordered Fields Are FF-monoids}
	\author{Felix Gotti}
	\address{Mathematics Department\\UC Berkeley\\Berkeley, CA 94720}
	\email{felixgotti@berkeley.edu}
	\date{\today}
	
	\begin{abstract}
		Given an ambient ordered field $K$, a positive monoid is a countably generated additive submonoid of the nonnegative cone of $K$. In this paper, we first generalize several atomic features exhibited by Puiseux monoids of the field of rational numbers to the more general setting of positive monoids of Archimedean fields, accordingly arguing that such generalizations might fail if the ambient field is not Archimedean. In particular, we show that a positive monoid $P$ of an Archimedean field is a BF-monoid provided that $P \! \setminus \! \{0\}$ does not have $0$ as a limit point. Then, we prove our main result: every increasing positive monoid of an ordered field is an FF-monoid. Finally, we deduce that every increasing positive monoid is hereditarily atomic.
	\end{abstract}
	
	\maketitle
	
	\section{Introduction} \label{sec:intro}
	
	The family of Puiseux monoids was introduced in \cite{fG16}, where the atomic structure of its members was studied. \emph{Puiseux monoids} are additive submonoids of $\qq_{\ge 0}$. They exhibit a very complex atomic structure. Indeed, there are nontrivial Puiseux monoids having no irreducible elements at all (i.e., being \emph{antimatter}), while others, failing to be atomic, contain infinitely many irreducible elements.
	
	In this paper, we generalize the notion of Puiseux monoid of $\qq$ by considering certain additive submonoids of the nonnegative rational cone of an arbitrary ordered field. In fact, we will study the atomic structure of an even more general family of commutative monoids (see Section~\ref{sec:ordered fields} for the definitions related to ordered fields).
	
	\begin{definition}
		Let $K$ be an ordered field. A \emph{positive monoid} $P$ of $K$ is a countably generated additive submonoid of the nonnegative cone of $K$. In this case, we call $K$ an \emph{ambient field} for $P$.
	\end{definition}
	
	 Every Puiseux monoid is, therefore, a positive monoid of the ambient field $\qq$. In \cite{fG16} and \cite{GG16}, many techniques were introduced to understand the atomic structure of Puiseux monoids. Here we will modify various of these results, providing the appropriate conditions for them to hold in the more general context of positive monoids of an arbitrary ordered field. Furthermore, we study the family of positive monoids that can be generated by increasing sequences, the following being our main theorem.
	 
	 \begin{maintheorem*}
	 	Every increasing positive monoid of an ordered field is an \emph{FF}-monoid.
	 \end{maintheorem*}
	 
	 After verifying that every submonoid of a positive BF-monoid is atomic, we obtain, as a consequence of our main result, that if a positive monoid is increasing, then all its submonoids are atomic.
	
	 This paper is organized in the following way. In Section~\ref{sec:commutative monoids}, we establish the nomenclature of commutative monoids as well as the terminology for the basic notions related to their atomic structure and factorization theory. Then, in Section~\ref{sec:ordered fields}, we go over the fundamental concepts of ordered fields we will be using throughout this paper. In Section~\ref{sec:from Puiseux monoids to positive monoids}, among other minor results, we describe those positive monoids that are isomorphic to Puiseux monoids. In the same section, we also present Proposition~\ref{prop: positive moniods in Archimedean fields are atomic if they don't have zero as a limit point} and Proposition~\ref{prop:strongly increasing iff hereditarily increasing}, two results on Puiseux monoids that naturally generalize to positive monoids of Archimedean ordered fields, but that fail when the Archimedean condition is dropped. In Section~\ref{sec:increasing positive monoids}, we study the atomic structure of increasing positive monoids. We will argue that a positive monoid $P$ is a BF-monoid provided that $P^\bullet$ does not have~$0$ as a limit point. In addition, we deduce from our main theorem that every increasing positive monoid is hereditarily atomic. The last section is dedicated to prove our main result.
	
	\section{Atomicity and Factorization on Commutative Monoids} \label{sec:commutative monoids}
	
	This section contains basic terminology concerning the atomic structure and factorization theory of commutative monoids. Here we also introduce notation for two special families of commutative monoids that will appear systematically in this sequel: numerical semigroups and Puiseux monoids. Reference material on commutative semigroups can be found in \cite{pG01} of Grillet. On the other hand, the monograph \cite{GH06} of Geroldinger and Halter-Koch offers extensive background information on atomic monoids and their non-unique factorization theory.
	
	We use the double-struck symbol $\mathbb{N}$ to denote the set of positive integers, and we set $\nn_0 = \mathbb{N} \cup \{0\}$. If $R \subseteq \rr$ and $r \in R$, then we let $R_{\ge r}$ denote the set $\{x \in R \mid x \ge r\}$. With a similar intension we use $R_{\le r}$, $R_{> r}$, and $R_{< r}$. If $q \in \qq_{> 0}$, then the unique $a,b \in \nn$ such that $q = a/b$ and $\gcd(a,b)=1$ are denoted by $\mathsf{n}(q)$ and $\mathsf{d}(q)$, respectively. For $Q \subseteq \qq_{>0}$, the sets
	\[
		\mathsf{n}(Q) = \{\mathsf{n}(q) \mid q \in Q\} \quad \text{and} \quad \mathsf{d}(Q) = \{\mathsf{d}(q) \mid q \in Q\}
	\] are called \emph{numerator set} and \emph{denominator set} of $Q$, respectively. Finally, for a set $S$ we will write sometimes $\{s_n\} \in S^\infty$ when $\{s_n\}$ is a sequence whose terms are in $S$.
	
	Recall that a \emph{semigroup} is just a map $* \colon S \times S \to S$, which is usually denoted by $(S,*)$, or simply by $S$ provided that $*$ is clear from the context. A semigroup with identity is customary called a \emph{monoid}. However, to avoid wordiness, every \emph{monoid} in this paper is always presumed commutative and cancellative. In addition, every monoid homomorphism here is assumed to send the identity to the identity. For the rest of this section, let $M$ be a monoid.
	
	Because every monoid here is assumed to be commutative, we will use additive notation. The set $M \! \setminus \! \{0\}$ is denoted by $M^\bullet$\!, while the set of units of $M$ is denoted by $M^\times$\!. The monoid $M$ is said to be \emph{reduced} if $M^\times$ contains only the identity element. All monoids we will be dealing with are reduced. For $a,c \in M$, we say that $a$ \emph{divides}~$c$ \emph{in} $M$ and write $a \mid_M c$ if $c = a + b$ for some $b \in M$. A submonoid $N$ of $M$ is said to be \emph{divisor-closed} if for every $a \in N$ and $d \in M$ the fact that $d \mid_M a$ implies that $d \in N$. We write $M = \langle S \rangle$ when $M$ is generated by a set $S$. The monoid $M$ is \emph{finitely generated} if it can be generated by a finite set. A succinct exposition of finitely generated commutative monoids can be found in \cite{GR99} by Garc\'ia-S\'anchez and Rosales.
	
	An element $a \in M \! \setminus \! M^\times$ is \emph{irreducible} or an \emph{atom} if $a = x + y$ for $x,y \in M$ implies that either $x \in M^\times$ or $y \in M^\times$. The set of atoms of $M$ is denoted by $\mathcal{A}(M)$, and $M$ is called \emph{atomic} if $M = \langle \mathcal{A}(M) \rangle$. By contrast, $M$ is said to be \emph{antimatter} if $\mathcal{A}(M)$ is empty. Antimatter domains and monoids were first defined in \cite{CDM99} and \cite{fG16}, respectively.
	
	 Assume that $M$ is reduced. The free abelian monoid on $\mathcal{A}(M)$ is denoted by $\mathsf{Z}(M)$ and called \emph{factorization monoid} of $M$; the elements of $\mathsf{Z}(M)$ are called \emph{factorizations}. If $z = a_1 \dots a_n \in \mathsf{Z}(M)$ for some $n \in \nn_0$ and $a_1, \dots, a_n \in \mathcal{A}(M)$, then $n$ is the \emph{length} of the factorization $z$, commonly denoted by $|z|$; we say that an atom $a$ \emph{shows in} $z$ if $a \in \{a_1, \dots, a_n\}$. The unique homomorphism
	 \[
		 \phi \colon \mathsf{Z}(M) \to M \ \ \text{satisfying} \ \ \phi(a) = a \ \ \text{for all} \ \ a \in \mathcal{A}(M)
	 \]
	 is called the \emph{factorization homomorphism} of $M$. For $x \in M^\bullet$,
	 \[
		 \mathsf{Z}(x) = \phi^{-1}(x) \subseteq \mathsf{Z}(M)
	\]
	is the \emph{set of factorizations} of $x$. By definition, we set $\mathsf{Z}(0) = \{0\}$. If $x \in M$ satisfies $|\mathsf{Z}(x)| = 1$, then we say that $x$ has \emph{unique factorization}. Note that the monoid $M$ is atomic if and only if $\mathsf{Z}(x)$ is not empty for all $x \in M$.
	
	\begin{definition}
		A monoid $M$ is called an \emph{FF-monoid} if it is atomic and for all $x \in M$ the set $\mathsf{Z}(x)$ is finite.
	\end{definition}
	
	The next proposition, which we will refer recurrently here, follows from \cite[Proposition~2.7.8(4)]{GH06a}.
	
	\begin{proposition} \label{prop:f.g. atomic monoids are FF-monoids}
		Every finitely generated reduced monoid is an \emph{FF}-monoid.
	\end{proposition}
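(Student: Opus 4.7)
The plan is to verify the result in two stages: first that $\mathcal{A}(M)$ is finite, and then that every fiber $\mathsf{Z}(x)$ is finite. Throughout I assume $M$ is reduced, so that $\mathsf{Z}(M)$ and the factorization homomorphism are defined as in the paper.

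Fix a finite generating set $\{s_1, \dots, s_n\}$ for $M$, where we may assume every $s_i$ is nonzero. For any atom $a \in \mathcal{A}(M)$, write $a = \sum_{i=1}^n c_i s_i$ with $c_i \in \nn_0$. Since $a \neq 0$ we have $\sum c_i \ge 1$. If $\sum c_i \ge 2$, we may split $a = s_j + (a - s_j)$ for some $j$ with $c_j \ge 1$, and both summands lie in $M^\bullet$ because $M$ is reduced; this contradicts atomicity. Hence $\sum c_i = 1$ and $a \in \{s_1, \dots, s_n\}$, so $\mathcal{A}(M)$ is finite, say $\mathcal{A}(M) = \{a_1, \dots, a_k\}$.

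Next, identify $\mathsf{Z}(M)$ with $\nn_0^k$ via the ordered basis $a_1, \dots, a_k$, and extend $\phi$ to a group homomorphism $\tilde\phi \colon \zz^k \to G$, where $G$ is the difference group of $M$. Let $L = \ker \tilde\phi$. For any $x \in M$ and any fixed $f \in \mathsf{Z}(x)$ one has $\mathsf{Z}(x) = (f + L) \cap \nn_0^k$. The key observation is that $L \cap \nn_0^k = \{0\}$: indeed, any $\ell = (\ell_1, \dots, \ell_k)$ in this intersection satisfies $\sum \ell_i a_i = 0$ in $M$, and since $M$ is reduced no nontrivial sum of atoms can vanish, forcing $\ell = 0$.

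To finish, apply Dickson's lemma: the subset $\mathsf{Z}(x) \subseteq \nn_0^k$ has only finitely many minimal elements $h_1, \dots, h_t$ under the componentwise order. For any $h \in \mathsf{Z}(x)$ there is some $h_j$ with $h_j \le h$; then $h - h_j \in L \cap \nn_0^k = \{0\}$, so $h = h_j$. Therefore $\mathsf{Z}(x) = \{h_1, \dots, h_t\}$ is finite, as needed. I expect the main obstacle to be recognizing the interplay between the reduced hypothesis and Dickson's lemma: the reduced assumption is precisely what forces $L \cap \nn_0^k = \{0\}$, which in turn collapses every element of the infinite affine lattice $f + L$ lying in the positive orthant down to a minimal one. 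By comparison, the finiteness of $\mathcal{A}(M)$ is a routine consequence of atomicity paired with a finite generating set.
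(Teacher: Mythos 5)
Your proof is correct. Note that the paper does not actually prove this proposition; it only cites \cite[Theorem~3.1.4]{GH06a}, and your argument is essentially a self-contained rendering of the standard proof behind that citation. Both halves check out: every atom of a reduced monoid must appear in any generating set (since a sum of two or more nonzero elements of a reduced monoid is a nonzero non-atom decomposition), so $\mathcal{A}(M)$ is finite; and the identification of $\mathsf{Z}(x)$ with $(f+L)\cap \nn_0^k$ uses cancellativity (to embed $M$ in its difference group $G$), while reducedness gives $L\cap \nn_0^k=\{0\}$, so Dickson's lemma forces every element of $\mathsf{Z}(x)$ to be minimal and hence $\mathsf{Z}(x)$ to be finite. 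The only point worth flagging is that the statement as written says ``monoid'' without qualification, and your argument leans on the paper's standing conventions that monoids are commutative, cancellative, and reduced; you state the reduced hypothesis explicitly, which is the right thing to do, since cancellativity is also genuinely needed for the embedding into $G$ and reducedness for the triviality of $L\cap\nn_0^k$.
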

	
	\noindent For each $x \in M$, the \emph{set of lengths} of $x$ is defined by
	\[
		\mathsf{L}(x) = \{|z| : z \in \mathsf{Z}(x)\}.
	\]
	The set of lengths and further arithmetic invariants of many families of atomic monoids has been significantly studied during the last decades (see \cite{ACHP07,CCMMP14,CGP14,GS16} and references therein). If $\mathsf{L}(x)$ is a finite set for all $x \in M$, we say that $M$ satisfies the \emph{bounded factorization property}, in which case, we call $M$ a BF-\emph{monoid}. Proposition~\ref{prop:f.g. atomic monoids are FF-monoids} implies that every finitely generated atomic monoid is a BF-monoid.
	
	A \emph{numerical semigroup} is a cofinite submonoid of the additive monoid $\nn_0$. Every numerical semigroup has a unique minimal set of generators, which is finite. For a numerical semigroup $N$ minimally generated by the positive integers $a_1 , \dots, a_n$, we have that $\gcd(a_1, \dots, a_n) = 1$ and $\mathcal{A}(N) = \{a_1, \dots, a_n\}$. Thus, every numerical semigroup is an atomic monoid containing finitely many atoms. A great first approach to the realm of numerical semigroups can be found in \cite{GR09}.
	
	\vspace{3pt}
	A \emph{Puiseux monoid} is an additive submonoid of $\qq_{\ge 0}$. Albeit a natural generalization of numerical semigroups, Puiseux monoids are not always atomic. Consider, for instance, $\langle 1/2^n \mid n \in \nn \rangle$. However, a Puiseux monoid $M$ is atomic provided $M^\bullet$ does not have $0$ as a limit point (\cite[Theorem~3.10]{fG16}). If an atomic Puiseux monoid is not isomorphic to a numerical semigroup, then it contains infinitely many atoms. For example, it is not hard to verify that if $M = \langle 1/p \mid p \text{ is prime} \rangle$, then $|\mathcal{A}(M)| = \infty$. The atomicity of Puiseux monoids was the center of attention in \cite{fG16}.
	
	\begin{definition}
		A Puiseux monoid is \emph{increasing} (resp., \emph{decreasing}) if it can be generated by an increasing (resp., decreasing) sequence of rationals.
	\end{definition}

	Let $M$ be a Puiseux monoid. We say that $M$ is \emph{monotone} if it is either increasing or decreasing. On the other hand, we say that $M$ is \emph{strongly increasing} if there exists an increasing generating sequence $\{s_n\}$ for $M$ such that $\lim s_n = \infty$. The atomic structure of monotone Puiseux monoids was studied in \cite{GG16}.
	
	\section{Ordered Fields} \label{sec:ordered fields}
	
	In this section, we briefly recall some concepts related to ordered fields as a way to establish the notation we will be using later. For ordered fields we mostly follow the notation in \cite{DW96}. In addition, in \cite[Chapters 11 and 12]{sL02}, readers can find the rudiments on ordered fields we will assume in this sequel.
	
	\begin{definition}
		An \emph{ordered field} is a pair $(K,K^+)$, where $K$ is a field and $K^+ \subset K$, satisfying the following conditions.
		\begin{itemize}
			\item For all $x,y \in K^+$,  $x+y\in K^+$ and $xy \in K^+$.
			\item For all $x \in K \! \setminus \! \{0\}$, exactly one of the statements $x=0$, $x \in K^+$ and $-x \in K^+$ holds.
		\end{itemize}
		In this case, we also say that $K$ is an ordered field with \emph{positive cone} $K^+$ (or \emph{nonnegative cone} $K^+ \cup \{0\}$).
	\end{definition}
	
	\begin{remark}
		Notice that if $K$ is an ordered field with positive cone $K^+$, then $K$ is also an ordered set, where $x \le y$ in $K$ if and only if $y-x \in K^+$. We will always consider $K^+$ as an ordered set with the order $\le$ inherited from $K$.
	\end{remark}
	
	Let $K$ be an ordered field with positive cone $K^+$. Clearly, every ordered field has characteristic zero. Therefore the prime subfield of $K$ is isomorphic to $\qq$. Indeed, the prime subfield of every ordered field in this paper will always be identified with $\qq$. For each $x \in K$ set $|x| = x$ if $x \in K^+ \cup \{0\}$ and $|x| = -x$ otherwise. In addition, for $y \in K$, we write $x = {\bf O}(y)$ if $|x| \le n|y|$ for some $n \in \nn$, and $x \sim y$ if both $x = {\bf O}(y)$ and $y = {\bf O}(x)$ hold. Clearly, $\sim$ defines an equivalence relation on $K^\times$. Let
	\[
		 \beta \colon K^\times \to \Gamma_K := K^\times/\!\sim
	\]
	be the quotient map. Setting $\beta(x) \preceq \beta(y)$ when $y = {\bf O}(x)$, one finds that $(\Gamma_K, \preceq)$ is a well-defined totally ordered set. Moreover, the multiplication of $K$ induces a group structure on $\Gamma_K$ under which $\Gamma_K$ is a totally ordered group (see \cite{DW96}). The group $\Gamma_K$ is the \emph{value group} of $K$. The elements of $\Gamma_K$ are called \emph{Archimedean classes}, and the quotient map $\beta \colon K^\times \to \Gamma_K$ is called \emph{Archimedean valuation}.
	  
	 An element $x \in K$ is \emph{finite} if $x = {\bf O}(1)$, while $x$ is called \emph{infinitesimal} (resp., \emph{infinitely large}) if $|x| < 1/n$ (resp., $|x| > n$) for every natural number $n$. Obviously, $K$ contains nonzero infinitesimals if and only if it contains infinitely large elements. The set of infinitesimals of $K$ is denoted by $K_0$, while the set of finite elements is denoted by $K_\#$; they are both additive subgroups of $K$. The field $K$ is said to be \emph{Archimedean} if $K_0 = \{0\}$. Note that $K$ is Archimedean if and only if its value group $\Gamma_K$ is trivial; readers can find $42$ equivalent definitions of Archimedean ordered field in \cite[Section~4]{DT14}.
	 
	 The order topology on $K$ has a basis consisting of all the intervals of the forms $(x,y)$, $(-\infty, y)$, and $(x, \infty)$ for $x,y \in K$. It is well-known that $K$ is Archimedean if and only if its prime subfield is order-theoretically dense in $K$ and, in such a case, $K$ is isomorphic as an ordered field to a subfield of $\rr$. Moreover, $K$ is a Hausdorff topological group (under addition) and a completely regular space (see \cite[Lemma~2.1]{dD00}). The field $K$ is said to be \emph{complete} if every Cauchy sequence converges. Every ordered field can be densely order-embedded in a complete ordered field. There are many equivalent definitions of completeness; $72$ of them are given in \cite[Section 3]{DT14}.
	 
	 We conclude this section presenting an example of non-Archimedean ordered field that we will be using later.
	 
	 \begin{example} \label{ex:non-Archimedean field of fractions}
	 	Let $K$ be an ordered field, and let $K(X)$ be the field of rational functions over $K$. If $p(X) \in K[X]$ is a nonzero polynomial, then let $\ell(p(X))$ denote its leading coefficient. Now set
	 	\begin{align} \label{eq:positive cone of the field of fractions}
		 	K(X)^+ = \bigg\{\frac{p(X)}{q(X)} \ \bigg{|} \ p(X), q(X) \in K[X] \! \setminus \! \{0\} \ \text{ and } \ \frac{\ell(p(X))}{\ell(q(X))} > 0 \bigg\}
	 	\end{align}
	 	and check that $K(X)^+$ is indeed a positive cone making $K(X)$ an ordered field. The ordered field $K(X)$ is not Archimedean as $1/X$ (resp., $X$) is infinitesimal (resp., infinitely large). For another non-Archimedean ordering on $K(X)$, see \cite[Example~2.5]{HT15}. In this paper, we always consider $K(X)$ as an ordered field with the positive cone given in \eqref{eq:positive cone of the field of fractions}.
	 \end{example}

	\section{From Puiseux Monoids to Positive Monoids} \label{sec:from Puiseux monoids to positive monoids}
	
	Recall that a positive monoid $P$ of a given ambient ordered field $K$ is a countably generated additive submonoid of the nonnegative cone of $K$. We begin this section describing the positive monoids that are isomorphic to Puiseux monoids. Then we restate two properties of Puiseux monoids, \cite[Theorem~3.10]{fG16} and \cite[Theorem~3.9]{GG16}, but in the more general context of positive monoids of an Archimedean ordered field, and we verify that these results do not hold when the ambient field fails to be Archimedean. First, let us generalize the concept of Puiseux monoid.
	
	\begin{definition}
		Let $K$ be an ordered field. A \emph{Puiseux monoid} of $K$ is a positive monoid that is contained in the prime subfield of $K$.
	\end{definition}
	
	Since we can always identify the prime subfield of an ordered field with the field of rational numbers, our definition of Puiseux monoid is consistent with that one given in Section \ref{sec:commutative monoids}. It follows immediately that a Puiseux monoid is isomorphic to a numerical semigroup if and only if it is finitely generated. It is natural to wonder under which circumstances a positive monoid is isomorphic to a Puiseux monoid. Notice that if~$P$ is a positive monoid of an ordered field $K$, then so is $aP$ for all $a \in K^+$. The next proposition classifies the positive monoids that are isomorphic to either Puiseux monoids or numerical semigroups.
	
	\begin{proposition}
		Let $K$ be an ordered field, and let $P$ be a positive monoid of $K$. Then the following statements hold.
		\begin{enumerate}
			\item $P$ is isomorphic to a Puiseux monoid of $K$ if and only if there exists $a \in K_{> 0}$ such that $aP$ is a Puiseux monoid.
			\item $P$ is isomorphic to a numerical semigroup if and only if $P$ is finitely generated and $aP$ is a Puiseux monoid for some $a \in K_{> 0}$\!.
		\end{enumerate}
	\end{proposition}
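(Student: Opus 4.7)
The plan is to establish (1) first, and then deduce (2) from it together with the observation recorded just before the statement that a Puiseux monoid is isomorphic to a numerical semigroup precisely when it is finitely generated.

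The easy direction of (1) is immediate: if $aP$ is a Puiseux monoid for some $a \in K_{>0}$, then multiplication by $a$ restricts to a monoid isomorphism $P \to aP$, witnessing $P$ as isomorphic to a Puiseux monoid. For the reverse implication, let $\phi \colon P \to P'$ be a monoid isomorphism onto some Puiseux monoid $P' \subseteq \qq_{\ge 0}$; one may assume $P \neq \{0\}$, since otherwise any $a \in K_{>0}$ works. Because $P$ is cancellative, it embeds in its group of differences, which is precisely the additive subgroup $G$ of $K$ generated by $P$; similarly $P'$ embeds in the subgroup $G' \subseteq \qq$ generated by $P'$, and $\phi$ extends uniquely to a group isomorphism $G \to G'$. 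Since $G' \subseteq \qq$ is a torsion-free abelian group of rank at most $1$, so is $G$. Now fix any $p \in P \setminus \{0\}$, so that $p > 0$ in $K$. Given $q \in P$, the rank bound on $G$ yields integers $m$ and $n$, not both zero, with $mp + nq = 0$ in $G \subseteq K$. Because $K$ has characteristic zero and $p \neq 0$, the integer $n$ cannot vanish, and one obtains $q = -(m/n)p$ with $m/n \in \qq$. Setting $a = 1/p \in K_{>0}$, it follows that $aq = -m/n \in \qq_{\ge 0}$ for every $q \in P$, so $aP$ is a Puiseux monoid.

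Part (2) then follows formally from (1). In the forward direction, every numerical semigroup is a finitely generated Puiseux monoid, so if $P$ is isomorphic to one, then $P$ is finitely generated and (1) supplies the required $a$. Conversely, if $P$ is finitely generated and $aP$ is a Puiseux monoid for some $a \in K_{>0}$, then $aP$ is a finitely generated Puiseux monoid, and by the cited observation $aP$ is isomorphic to a numerical semigroup; composing with the isomorphism $P \to aP$ completes the argument.

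The only substantive step, and hence the main obstacle, is the reverse direction of (1): one must convert an abstract monoid isomorphism $P \cong P'$ with $P' \subseteq \qq$ into a concrete rescaling $a \in K_{>0}$ realizing $aP \subseteq \qq$. This forces one to combine two sources of information, namely the monoid isomorphism, which supplies the rank bound on $G$, and the ambient embedding $P \subseteq K$, which allows one to promote an abstract $\zz$-linear dependence in $G$ into a genuine rational relation between elements of the field $K$.
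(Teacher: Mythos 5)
Your proof is correct, and the substantive step --- the forward direction of (1) --- takes a genuinely different route from the paper's. The paper works directly inside the Puiseux monoid $Q \cong P$: it observes that $N = \nn_0 \cap Q$ is a finitely generated submonoid of $\nn_0$, uses the relations $n_1 n_i = n_i n_1$ to pin down $\varphi$ on $N$, and then uses $\mathsf{d}(q)\,q = \mathsf{n}(q) \in N$ to conclude that the given isomorphism $\varphi \colon Q \to P$ is literally multiplication by a fixed scalar $a^{-1}$, so that $aP = Q$. You instead pass to the group of differences, note that a monoid isomorphism induces an isomorphism of difference groups, import the rank-$\le 1$ property from the subgroup of $\qq$, and then use the ambient field to turn $\zz$-linear dependence of any $q$ on a fixed $p \in P^\bullet$ into the rational relation $q = -(m/n)p$, whence $(1/p)P \subseteq \qq_{\ge 0}$. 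Your argument is more conceptual and isolates exactly what is used from the hypothesis (only that the difference group has rank at most one); it does not, however, recover the paper's sharper byproduct that \emph{every} isomorphism from a Puiseux monoid onto $P$ is itself a scaling --- which your construction sidesteps by producing a possibly different scalar $a = 1/p$. Since the proposition only asks for existence of some $a$, this loss is harmless. Part (2) is handled identically in both arguments.
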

	
	\begin{proof}
		To show (1), suppose that $P$ is isomorphic to a Puiseux monoid $Q = \langle q_n \mid n \in \nn \rangle$ via the isomorphism $\varphi \colon Q \to P$, where $\{q_n\}$ is a sequence of positive rationals. The submonoid $N = \nn_0 \cap Q$ is finitely generated, say $N = \langle n_1, \dots, n_k \rangle$ for some $k \in \nn_0$ and $n_1, \dots, n_k \in \nn$. For every $i \in \{1,\dots,k\}$ we have
		\[
			\varphi(n_i) = \frac{1}{n_1} \varphi(n_1n_i) = \frac{n_i}{n_1} \varphi(n_1).
		\]
		Since $\varphi$ is injective, $\varphi(n_1) \neq 0$. Set $a = n_1/\varphi(n_1)$. If $q \in Q^\bullet$, then $\mathsf{n}(q) \in N$. Therefore there exist coefficients $c_1, \dots, c_k \in \nn_0$ such that $\mathsf{n}(q) = c_1n_1 + \dots + c_kn_k$. As a result, one obtains that
		\[
			\mathsf{d}(q) \varphi(q) = \varphi(\mathsf{n}(q)) = \varphi\bigg(\sum_{i=1}^k c_i n_i\bigg) = \sum_{i=1}^k c_i \varphi(n_i) = \sum_{i=1}^k c_i n_ia^{-1} = a^{-1} \mathsf{n}(q).
		\]
		Thus, $\varphi(q) = a^{-1}q$ for every $q \in Q$. Since $\varphi$ is surjective $a^{-1}Q = P$, which means that $aP$ is the Puiseux monoid $Q$.
		
		Conversely, suppose that $aP$ is a Puiseux monoid for some $a \in K^+$. Since multiplication by $a$ defines an isomorphism from $P$ to $aP$, it follows immediately that $P$ is isomorphic to a Puiseux monoid.
			
		Now we verify (2). If $P$ is isomorphic to a numerical semigroup, then it is finitely generated. Since every numerical semigroup is, in particular, a Puiseux monoid, $P$ is isomorphic to a Puiseux monoid. By part (1), there exists $a \in K^+$ such that $aP$ is a Puiseux monoid. Finally, let us check the reverse implication of (2). Since $aP$ is a Puiseux monoid for some $a \in K^+$, part (1) ensures that $P$ is isomorphic to a Puiseux monoid. Because finitely generated Puiseux monoids are isomorphic to numerical semigroups, the proof is complete.
	\end{proof}
	
	The next three propositions establish sufficient conditions for positive and Puiseux monoids to be atomic.
	
	\begin{proposition} \label{prop:a PM is atomic iff it has a minimal generating set}
		Let $P$ be a positive monoid of an ordered field. Then $P$ contains a minimal generating set $A$ if and only if $P$ is atomic with $A = \mathcal{A}(P)$; in such a case, $A$ is the unique minimal generating set of $P$.
	\end{proposition}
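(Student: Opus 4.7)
The plan is to exploit two structural facts about any positive monoid $P$: since $P \subseteq K^+$, it is reduced (so $P^\times = \{0\}$), and a sum of elements of $P$ vanishes only when each summand is zero. Uniqueness of a minimal generating set will drop out once both implications are established, since each identifies it with $\mathcal{A}(P)$.

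For the forward direction, suppose $A$ is a minimal generating set of $P$. I would first show $A \subseteq \mathcal{A}(P)$. Fix $a \in A$ and any decomposition $a = x + y$ with $x, y \in P$; expand $x = \sum c_i a_i$ and $y = \sum d_i a_i$ with $a_i \in A$, so that $a = \sum (c_i + d_i) a_i$. If $a$ itself did not occur with positive coefficient in this sum, then $a \in \langle A \setminus \{a\} \rangle$, and back-substitution into the expansion of any element of $P$ would show $P = \langle A \setminus \{a\} \rangle$, contradicting minimality. Hence some $a_k = a$ has $c_k \ge 1$ (after possibly swapping $x$ and $y$), giving $x = a + x'$ with $x' \in P$. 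Cancelling $a$ in $a = (a + x') + y$ yields $x' + y = 0$, and positivity forces $y = 0$. Thus $a \in \mathcal{A}(P)$, and since $\langle A \rangle = P$, the monoid $P$ is atomic. For the reverse inclusion, take $b \in \mathcal{A}(P)$ and expand $b = \sum c_i a_i$ with $a_i \in A$; peel off one summand $a_{i_0}$ with $c_{i_0} \ge 1$ to write $b = a_{i_0} + r$ with $r \in P$. Since $a_{i_0}$ is not a unit, irreducibility of $b$ forces $r = 0$, so $(c_{i_0} - 1)a_{i_0} + \sum_{i \ne i_0} c_i a_i = 0$, and positivity together with torsion-freeness of $P$ yield $c_{i_0} = 1$ and $c_i = 0$ for $i \ne i_0$, whence $b = a_{i_0} \in A$.

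For the reverse direction, assume $P$ is atomic with $A = \mathcal{A}(P)$; generation is immediate. If some $a \in A$ lay in $\langle A \setminus \{a\} \rangle$, the same peel-off argument would force $a = a'$ for some $a' \in A \setminus \{a\}$, a contradiction, so $A$ is minimal. Uniqueness is then automatic: any minimal generating set coincides with $\mathcal{A}(P)$ by the forward direction.

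The main obstacle, to the extent there is one, is organizational rather than technical: every step reduces to the same maneuver---write a decomposition over $A$, cancel a shared generator, and invoke the positivity of $K^+$---so the real task is to set up the bookkeeping cleanly and to keep in mind that atoms in a reduced monoid are nonzero.
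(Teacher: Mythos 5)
Your proof is correct, and it is essentially the paper's argument spelled out in full: the paper simply observes that $P$ is reduced (because $P\subseteq K^+$, so a sum of elements of $P$ vanishes only trivially) and cites \cite[Proposition~1.1.7]{GH06a} for the standard fact that in a reduced cancellative monoid the minimal generating set, when it exists, is exactly $\mathcal{A}(P)$. Your cancel-and-use-positivity maneuver is precisely the content of that cited fact, so nothing is missing beyond the trivial remark that $0$ cannot belong to a minimal generating set (which you need in order to know the elements of $A$ are non-units).
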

	
	The above proposition follows from the fact that every positive monoid of an ordered field is reduced (see \cite[Proposition~1.1.7]{GH06a}).
	
	Let $K$ be an ordered field. As we have identified the prime subfield of $K$ with~$\qq$, it makes sense to talk about prime, natural, and integer numbers in $K$. For a prime $p$, recall that the $p$-\emph{adic valuation} on $\qq$ is the map defined by $\pval(0) = \infty$ and $\pval(a/b) = \pval(a) - \pval(b)$ for all nonzero integers $a$ and $b$, where $\pval(z)$ is the exponent of the maximal power of $p$ dividing the integer number $z$. We say that a Puiseux monoid~$P$ of $K$ is \emph{finite} if there is a finite subset $S$ of $\qq_{>0}$ consisting of prime numbers such that $\pval(x) \ge 0$ for every $x \in P^\bullet$ and $p \notin S$. It is not hard to argue the following proposition.

	\begin{proposition}
		Let $P$ be a Puiseux monoid of an ordered field. Then $P$ is finite and $\{\pval(P)\}$ is bounded from below for every prime $p$ if and only if $\mathsf{d}(P^\bullet)$ is bounded. Moreover, if one of these conditions holds, then $P$ is atomic.
	\end{proposition}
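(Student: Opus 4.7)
The plan is to translate both hypotheses into bounds on the denominators $\mathsf{d}(x)$ via the identity $\mathsf{v}_p(x) = \mathsf{v}_p(\mathsf{n}(x)) - \mathsf{v}_p(\mathsf{d}(x))$ combined with the coprimality of $\mathsf{n}(x)$ and $\mathsf{d}(x)$, and then to derive atomicity by clearing denominators uniformly so as to embed $P$ inside $(\nn_0,+)$. For the forward direction, fix a finite set of primes $S = \{p_1, \dots, p_k\}$ witnessing that $P$ is finite, and for each $i$ choose $N_i \in \nn_0$ so that $\mathsf{v}_{p_i}(x) \ge -N_i$ for every $x \in P^\bullet$. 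For a prime $p \notin S$, the finiteness hypothesis forces $p \nmid \mathsf{d}(x)$; for $p = p_i \in S$, we get $\mathsf{v}_{p_i}(\mathsf{d}(x)) \le N_i$. Hence every $\mathsf{d}(x)$ divides $N := p_1^{N_1} \cdots p_k^{N_k}$, and $\mathsf{d}(P^\bullet)$ is bounded by $N$.

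For the converse, suppose $\mathsf{d}(x) \le M$ for every $x \in P^\bullet$ and take $S$ to be the finite set of primes not exceeding $M$. For $p \notin S$ one has $\mathsf{v}_p(\mathsf{d}(x)) = 0$, so $\mathsf{v}_p(x) \ge 0$, certifying that $P$ is finite. For an arbitrary prime $p$, the bound $\mathsf{v}_p(x) \ge -\mathsf{v}_p(\mathsf{d}(x)) \ge -\lfloor \log_p M \rfloor$ provides the required uniform lower bound.

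For atomicity, both conditions imply that $\mathsf{d}(P^\bullet)$ is a finite set, so its least common multiple $L$ is a well-defined positive integer. Multiplication by $L$ is then a monoid isomorphism from $P$ onto $LP$, which is a submonoid of $(\nn_0,+)$. Any submonoid of $\nn_0$ is atomic, since for any nonzero element there are only finitely many natural numbers not exceeding it, hence only finitely many divisors within the submonoid, which allows a straightforward induction on integer value to decompose the element into atoms. Transporting this back through the isomorphism shows that $P$ is atomic. I do not foresee a real obstacle here: the entire argument is essentially bookkeeping with $p$-adic valuations together with the standard atomicity of submonoids of $\nn_0$.
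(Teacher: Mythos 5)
Your proof is correct, and since the paper states this proposition without proof (``It is not hard to argue\dots''), your valuation-and-coprimality bookkeeping together with clearing denominators to land in $\nn_0$ is exactly the intended argument. Nothing to add.
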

	
	Let $K$ be an Archimedean ordered field. For every $x \in K$ there exists a unique integer~$n_x$ such that $n_x \le x < n_x + 1$. Hence the floor and ceiling functions make sense in the context of Archimedean fields. The next proposition generalizes \cite[Theorem~3.10]{fG16}, which says that if $0$ is not a limit point of a Puiseux monoid $P$ of $\rr$, then $P$ is atomic.
	
	\begin{proposition} \label{prop: positive moniods in Archimedean fields are atomic if they don't have zero as a limit point}
		Let $P$ be a positive monoid of an Archimedean ordered field. If $0$ is not a limit point of $P^\bullet$, then $P$ is a \emph{BF}-monoid.
	\end{proposition}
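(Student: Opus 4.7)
The plan is to exploit the non-accumulation of $0$ to extract a uniform positive lower bound on the nonzero elements of $P$, and then use the Archimedean hypothesis to convert that into a bound (in $\mathbb{N}$) on the length of any additive decomposition of an element of $P$.

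First I would observe that, because $0$ is not a limit point of $P$ in the order topology of $K$, there is some $\varepsilon \in K_{>0}$ with $P \cap (-\varepsilon, \varepsilon) = \{0\}$; equivalently, every element of $P^\bullet$ satisfies $y \ge \varepsilon$. Then, given any $x \in P$ and any representation $x = y_1 + \cdots + y_n$ with $y_i \in P^\bullet$, adding the inequalities yields $n\varepsilon \le x$. At this point I would invoke the Archimedean hypothesis on $K$: there exists $N \in \mathbb{N}$ with $x/\varepsilon \le N$, hence $n \le N$. So the lengths of all decompositions of $x$ into nonzero summands are bounded by a natural number depending only on $x$ and $\varepsilon$.

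From this uniform bound atomicity follows quickly. For each $x \in P^\bullet$, pick any decomposition $x = y_1 + \cdots + y_n$ with $y_i \in P^\bullet$ whose length $n$ is maximal (such a maximum exists by the previous paragraph). Each $y_i$ must be an atom, since otherwise splitting $y_i = u + v$ with $u,v \in P^\bullet$ would produce a longer decomposition, contradicting maximality. Hence $x \in \langle \mathcal{A}(P)\rangle$, and $P$ is atomic. The same length bound shows $\mathsf{L}(x) \subseteq \{1, \dots, N\}$ for every $x \in P$, so each set of lengths is finite; therefore $P$ is a BF-monoid.

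There is no genuine obstacle here; the only point that deserves care is making sure the Archimedean hypothesis is invoked precisely at the step where $n \le x/\varepsilon$ is upgraded to $n \le N$ for some natural $N$, since in a non-Archimedean $K$ the quotient $x/\varepsilon$ could be infinitely large and fail to bound the natural number $n$. This is consistent with the fact, mentioned earlier in the paper, that analogous statements for Puiseux monoids of $\mathbb{R}$ do not extend to arbitrary ordered fields.
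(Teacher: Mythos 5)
Your proof is correct and follows essentially the same route as the paper: extract a positive lower bound $\varepsilon$ on $P^\bullet$ from the non-accumulation of $0$, use the Archimedean property to bound the length of any decomposition of $x$ by a natural number (the paper writes this as $\lfloor x/\varepsilon\rfloor$), take a maximal-length decomposition to get atomicity, and reuse the same bound to conclude the BF property. Your explicit remark about where the Archimedean hypothesis enters is exactly the right point of care.
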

	
	\begin{proof}
		Let $K$ be an ambient ordered field of $P$. Clearly, the set $\mathcal{A}(P)$ consists of those elements of $P^\bullet$ that cannot be written as the sum of two positive elements of $P$. Since~$0$ is not a limit point of $P^\bullet$ there exists $\epsilon \in K_{> 0}$ such that $\epsilon < x$ for all $x \in P^\bullet$. To show that $P$ is atomic (i.e., $P = \langle \mathcal{A}(P) \rangle$), take $x \in P^\bullet$. Since $\epsilon$ is a lower bound for $P^\bullet$ and $\lfloor x/\epsilon \rfloor + 1 > x/\epsilon$, the element $x$ can be written as the sum of at most $\lfloor x/\epsilon \rfloor$ elements of $P^\bullet$. Let $m$ be the maximum natural number such that $x = a_1 + \dots + a_m$ for some $a_1, \dots, a_m \in P^\bullet$. By the maximality of $m$, it follows that $a_i \in \mathcal{A}(P)$ for each $i = 1,\dots,m$, which means that $x \in \langle \mathcal{A}(P) \rangle$. Hence $P$ is atomic. We have already noticed that every element $x$ in $P^\bullet$ can be written as the sum of at most $\lfloor x/\epsilon \rfloor$ positive elements, i.e., $|\mathsf{L}(x)| \le \lfloor x/\epsilon \rfloor$ for all $x \in P$. Thus, $P$ is a BF-monoid.
	\end{proof}
	
	As a special version of our main theorem, we have that every increasing positive monoid of an Archimedean field is an FF-monoid. However, under the hypothesis of Proposition~\ref{prop: positive moniods in Archimedean fields are atomic if they don't have zero as a limit point} we cannot always guarantee that $P$ is an FF-monoid. The next example illustrates this observation.
	
	\begin{example}
		Let $\{p_n\}$ be an enumeration of the prime numbers. Consider the Puiseux monoid $P$ of the Archimedean ambient field $\rr$ generated by the set
		\[
			A = \bigg \{ \frac{p_n + \lfloor p_n/2 \rfloor}{p_n}, \frac{2p_n - \lfloor p_n/2 \rfloor}{p_n}  \ \bigg{|} \ n \in \nn \bigg \}.
		\]
		Since $1 < a < 2$ for every $a \in A$, it follows that $\mathcal{A}(P) = A$ and, therefore, $P$ is atomic. As $a > 1$ for each $a \in A$, we see that $0$ is not a limit point of $P^\bullet$. However, for every $n \in \nn$, one finds that
		\[
			3 = \frac{p_n + \lfloor p_n/2 \rfloor}{p_n} + \frac{2p_n - \lfloor p_n/2 \rfloor}{p_n},
		\]
		which implies that $\mathsf{Z}(3)$ contains infinitely many factorizations. Hence $P$ fails to be an FF-monoid.
	\end{example}
	
	As we work on the more general setting of positive monoids of an arbitrary ordered field, the potential inclusion of infinitesimals might cause the failure of some properties showing in the more particular scenario of Puiseux monoids of Archimedean fields. For instance, let us see that the Archimedean condition in Proposition~\ref{prop: positive moniods in Archimedean fields are atomic if they don't have zero as a limit point} is required. Let~$K$ be a non-Archimedean ordered field, and let $\epsilon$ be a positive infinitesimal of $K$. So $\epsilon \le r$ for all $r$ in the positive cone $\qq_{>0}$ of the prime subfield. Since $\qq_{>0} \cap (-\epsilon, \epsilon)$ is empty, $0$ is not a limit point of the positive monoid $\qq_{>0}$. However, notice that $\qq_{>0}$ is not atomic; indeed, $\qq_{> 0}$ is antimatter because every $q \in \qq_{>0}$ is divisible by $q/2$.
	
	We say that a positive monoid $P$ of an ordered field $K$ is \emph{strongly increasing} provided that there exists an increasing generating sequence $\{s_n\}$ for $P$ such that for each $x \in K^+$ one can find $n \in \nn$ with $s_n > x$ (cf. definition of strongly increasing Puiseux monoid at the end of Section~\ref{sec:commutative monoids}). Note that this definition depends on the ambient field we choose to embed the positive monoid. It was proved in \cite[Section 3]{GG16} that a Puiseux monoid $P$ of $\qq$ is strongly increasing if and only if every submonoid of $P$ is increasing; the proof given there can be mimicked to establish the proposition below.
	
	\begin{proposition} \label{prop:strongly increasing iff hereditarily increasing}
		Let $P$ be a positive monoid of an Archimedean ordered field. Then $P$ is strongly increasing if and only if every submonoid of $P$ is increasing.
	\end{proposition}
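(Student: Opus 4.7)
The plan is to mirror the proof of the Puiseux analog \cite[Theorem~3.9]{GG16}. The Archimedean hypothesis lets me fix an order-embedding $K\hookrightarrow\rr$, after which all order-theoretic statements about subsets of $P$ take their usual real-analytic meaning. Each combinatorial step of the Puiseux proof either survives translation verbatim or is replaced by an elementary limit argument in $\rr$.

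For the forward direction, I assume $P$ is strongly increasing, so $P$ admits a generating sequence $\{a_n\}$ that is strictly increasing and satisfies $a_n\to\infty$ in $K$. Since $a_1>0$ bounds $P^\bullet$ from below and only finitely many $a_n$ lie in $[0,M]$ for any fixed $M>0$, a short counting argument gives $|P\cap[0,M]|<\infty$: every $x\in P\cap[0,M]$ decomposes as $\sum c_ia_i$ with $a_i\le M$ and $c_i\le M/a_1$, and only finitely many such tuples exist. Consequently $|N\cap[0,M]|<\infty$ for every submonoid $N\subseteq P$ and every $M>0$, so the elements of $N^\bullet$ can be enumerated in strictly increasing order as $n_1<n_2<\cdots$. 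This sequence trivially generates $N$, so $N$ is increasing.

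For the reverse direction I argue the contrapositive. Suppose $P$ is not strongly increasing; I will exhibit a submonoid of $P$ that is not increasing. If $P$ itself is not increasing, take $N=P$. Otherwise $P$ admits an increasing generating sequence $\{a_n\}$ but none tends to $\infty$, so every such sequence is bounded above in $K$; let $L=\sup_n a_n\in\rr$. Following the construction of \cite[Section~3]{GG16}, I build a submonoid $N$ by combining atoms of $P$ that accumulate from below at $L$ (or at a suitable multiple of $L$) so that any purported increasing generating sequence for $N$ must either omit the accumulation point or contain a strictly decreasing subsequence; the resulting $N$ inherits the structure of the archetypal non-increasing monoid $\langle 1/n\mid n\in\nn\rangle$ (suitably shifted) and therefore cannot be increasing.

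The principal obstacle is this construction in the reverse direction. In \cite{GG16} it rests on the divisibility arithmetic of $\mathsf{n}(\cdot)$ and $\mathsf{d}(\cdot)$ in $\qq$, which has no natural analog in a general Archimedean $K$; the order-embedding into $\rr$ supplies the needed substitute, replacing combinatorial divisibility data by topological accumulation. The Archimedean hypothesis is essential, since (as the paper already illustrates in the non-Archimedean examples preceding this proposition) being strongly increasing and the structural consequences it entails need not even be invariants of the underlying monoid when the ambient field fails to be Archimedean.
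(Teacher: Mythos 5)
Your forward direction is correct and complete, and it is actually more self-contained than what the paper offers, since the paper proves this proposition only by asserting that the argument of \cite[Section~3]{GG16} can be mimicked. The key point --- that a strongly increasing $P$ meets each interval $[0,M]$ in a finite set, so that every submonoid of $P$ can be enumerated increasingly --- is exactly the right substitute for the rational bookkeeping of the Puiseux case. (Two trivial repairs: discard zero terms so that $a_1>0$, and pad the enumeration when $N^\bullet$ is finite, since increasing sequences need not be strict.)

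The reverse direction, however, has a genuine gap. You correctly reduce to the case where $P$ is increasing with $\mathcal{A}(P)$ infinite, increasing, and bounded, say $a_1<a_2<\cdots$ with $a_n\to L\in\rr_{>0}$ (finiteness of $\mathcal{A}(P)$ is ruled out by Proposition~\ref{prop: f.g. implies strongly increasing iff Archimedean}). But the witnessing submonoid is only described, never constructed: ``combining atoms that accumulate from below at $L$'' restates the difficulty rather than resolving it, and you yourself note that the $\mathsf{n}(\cdot)/\mathsf{d}(\cdot)$ arithmetic driving the construction in \cite{GG16} is unavailable here, so the burden is on you to supply the replacement. The analogy with $\langle 1/n \mid n\in\nn\rangle$ is also misleading: that monoid is antimatter, whereas every submonoid of your $P$ is bounded below by $a_1>0$ and hence atomic by Proposition~\ref{prop: positive moniods in Archimedean fields are atomic if they don't have zero as a limit point}; a submonoid of $P$ can only fail to be increasing because its atom set has infinitely many elements below one of them. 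Here is a construction that closes the gap. Pick $j$ with $a_j>L/2$, so that $2a_j>L\ge a_n$ for every $n$, and set $N=\langle 2a_j,\, a_{j+1},\, a_{j+2},\,\dots\rangle$. Every element of $N^\bullet$ is at least $a_{j+1}$, so any sum of two elements of $N^\bullet$ is at least $2a_{j+1}>2a_j>L$; consequently $2a_j$ and every $a_n$ with $n>j$ are atoms of $N$. Thus $\mathcal{A}(N)$ has infinitely many elements below its element $2a_j$, so it is not an increasing set; since any increasing generating sequence of $N$ would have to contain $\mathcal{A}(N)$ in its underlying set, which admits only finitely many elements below any given one, $N$ is not increasing.
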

	
	The above proposition yields another property holding for positive monoids of an Archimedean ordered field that will no longer be true if we drop the Archimedean condition. In fact, both implications might fail if the ambient ordered field is not Archimedean. The next two examples shed light on this observation.
	
	\begin{example}
		Let $\rr(X)$ be the field of rational functions regarded as an ordered field with the positive cone given in Example~\ref{ex:non-Archimedean field of fractions}. Consider its positive monoids
		\[
			P = \big\langle X^n \mid n \in \nn \big\rangle \ \text{ and } \ P' = \big \langle X^3, X + nX^2 \mid n \in \nn \big \rangle.
		\]
		Note that $P$ is strongly increasing in $\rr(X)$ and $P'$ is a submonoid of $P$. We verify now that $P'$ is not increasing. Since $X^3 \notin \langle X + nX^2 \mid n \in \nn \rangle$, it is an atom of $P'$. Take $n \in \nn$ and $\alpha, \alpha_1, \dots, \alpha_n \in \nn_0$ such that
		\[
			X + nX^2 = \alpha X^3 + \sum_{k=1}^n \alpha_kX + \alpha_k k X^2
		\]
		(note that, in the above sum, it is enough to add only up to $n$). The displayed polynomial equality forces $\alpha = 0$ and $\alpha_1 + \dots + \alpha_n = 1$. Thus, there exists $j \in \{1, \dots, n\}$ such that $\alpha_j = 1$ and $\alpha_i = 0$ for $i \neq j$. This implies that $X + nX^2 \in \mathcal{A}(P')$. Therefore $\mathcal{A}(P') = \{X^3\} \cup \{X + nX^2 \mid n \in \nn\}$. Since there are infinitely many elements in $\mathcal{A}(P')$ that are less than $X^3$, the set of atoms of $P'$ is not the underlying set of any increasing sequence. Hence $P'$ cannot be generated by any increasing sequence. As a consequence, $P'$ fails to be an increasing positive monoid.
	\end{example}
	
	\begin{example}
		Consider again the field of rational functions $\rr(X)$ with the ordering given in Example~\ref{ex:non-Archimedean field of fractions}, and let $\{p_n\}$ be an increasing enumeration of the prime numbers considered as elements of the prime subfield of $\rr(X)$. In addition, let
		\begin{equation} \label{eq:classic strongly increasing PM}
			P = \bigg \langle \frac{p_n^2 + 1}{p_n} \ \bigg{|} \ n \in \nn \bigg\rangle.
		\end{equation}
		It is clear that $P$ is a strongly increasing Puiseux monoid of the ambient field $\qq$. So it follows by \cite[Theorem~3.9]{GG16} that every submonoid of $P$ is increasing in $\qq$ and, therefore, in $\rr(X)$. However, $P$ is not strongly increasing as a positive monoid of $\rr(X)$; this is because $X$ is an upper bound for $P$. Hence a positive monoid might fail to be strongly increasing even when all its submonoids are increasing.
	\end{example}

	\subsection*{Appendix}\footnote{The appendix subsection is not part of the original paper, as published in Journal of Algebra. However, note that the appendix has been added in a way that it does not affect the original labeling of definitions, examples, theorem, etc.}
	
	The converse of Proposition~\ref{prop: positive moniods in Archimedean fields are atomic if they don't have zero as a limit point} does not hold even in the context of Puiseux monoids. The next example confirms this fact.
	
	\begin{example}
		Let $\{p_n\}$ and $\{q_n\}$ be two strictly increasing sequence of prime numbers such that $q_n > p_n^2$. Now consider the Puiseux monoid
		\[
			P = \bigg \langle \frac{p_n}{q_n} \ \bigg{|} \ n \in \nn \bigg \rangle.
		\]
		It follows from~\cite[Corollary~5.6]{GG16} that $P$ is an atomic monoid, and it can be readily verified that $\mathcal{A}(P) = \{p_n/q_n \mid n \in \nn\}$. As $q_n > p_n^2$ for every $n \in \nn$, we have that~$0$ is a limit point of $P^\bullet$. Let us proceed to argue that $P$ is indeed an FF-monoid. Take $x \in P^\bullet$. Since both sequences $\{p_n\}$ and $\{q_n\}$ are strictly increasing, there exists $N \in \nn$ such that $q_n \nmid \mathsf{d}(x)$ and $p_n > x$ for every $n \ge N$. Now if $z \in \mathsf{Z}(x)$, then only the atoms $p_1/q_1, \dots, p_N/q_N$ can appear in $z$. Moreover, for each $i = 1, \dots, N$, the factorization~$z$ can contain at most $\big\lfloor \frac{q_i}{p_i}x \big\rfloor$ copies of the atom $p_i/q_i$. As a result, $\mathsf{Z}(x)$ is a finite set. Hence $P$ is an FF-monoid and, in particular, a BF-monoid. Note that the fact that $P$ is an FF-monoid cannot be deduced via the main result of this paper because $P$ is not an increasing Puiseux monoid.
	\end{example}
	
	\section{Increasing Positive Monoids} \label{sec:increasing positive monoids}
	
	In this section we extend several results achieved in \cite{GG16} to positive monoids of more general ordered fields. First, let us extend the concept of \emph{monotonicity} from Puiseux monoids to positive monoids. 
	
	\begin{definition}
		A positive monoid of an ordered field is \emph{increasing} (resp., \emph{decreasing}) if it can be generated by an increasing (resp., decreasing) sequence. A positive monoid of an ordered field is called \emph{monotone} if it is either increasing or decreasing.
	\end{definition}
	
	 If $P$ is an increasing positive monoid of an ordered field $K$, then $P$ is atomic and if $\{a_n\}$ is an increasing sequence generating $P$, then $\mathcal{A}(P) = \{a_n \mid a_n \notin \langle a_1, \dots, a_{n-1} \rangle \}$. This was proved in \cite[Proposition 3.2]{GG16} for $K = \qq$; the proof given there applies, \emph{mutatis mutandis}, when $K$ is an arbitrary ordered field. Let us record this observation to facilitate further references.
	
	\begin{proposition} \label{prop:increasing are atomic}
		Let $K$ be an ordered field, and let $P$ be the positive monoid of $K$ generated by the increasing sequence $\{a_n\}$. Then $P$ is atomic and
		\[
			\mathcal{A}(P) = \big\{a_n \mid a_n \notin \langle a_1, \dots, a_{n-1} \rangle \big\}.
		\]
	\end{proposition}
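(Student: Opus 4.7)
The plan is to establish the atom-set equality and atomicity simultaneously by exploiting a single structural observation that follows from the increasing hypothesis. Let $A = \{a_n \mid a_n \notin \langle a_1, \dots, a_{n-1} \rangle\}$. Every element of $P$ has a representation $\sum_{i} e_i a_i$ with $e_i \in \nn_0$ and finite support. The key remark I would establish first is: if $a_n = \sum_i e_i a_i$, then $e_i = 0$ for every $i$ with $a_i > a_n$; otherwise the sum would exceed $a_n$ since all generators are positive. For a strictly increasing sequence this means every expression of $a_n$ is supported on $\{a_1,\dots,a_n\}$ (the non-strict case is handled by replacing indices by distinct values of the sequence and noting that repeated terms are automatically excluded from $A$).

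Next I would show the containment $A \subseteq \mathcal{A}(P)$. Suppose $a_n \in A$ and $a_n = x + y$ with $x,y \in P^\bullet$. Writing $x = \sum_i c_i a_i$ and $y = \sum_i d_i a_i$, set $e_i = c_i + d_i$. By the key remark, $a_n = \sum_{i \le n} e_i a_i$. If $e_n \ge 1$, rearranging gives $\sum_{i<n} e_i a_i + (e_n - 1)a_n = 0$, forcing $e_n = 1$ and $e_i = 0$ for $i < n$; then $c_i + d_i = 0$ for $i < n$ and $c_n + d_n = 1$, so one of $x$ or $y$ is zero, contradicting $x,y \in P^\bullet$. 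If instead $e_n = 0$, then $a_n = \sum_{i<n} e_i a_i \in \langle a_1,\dots,a_{n-1}\rangle$, contradicting $a_n \in A$.

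For the reverse containment $\mathcal{A}(P) \subseteq A$, I would take $a \in \mathcal{A}(P)$ and write $a = \sum_i e_i a_i$. If $\sum_i e_i \ge 2$, then $a$ decomposes as a sum of at least two elements of $P^\bullet$, contradicting the fact that $a$ is an atom; hence $\sum_i e_i = 1$, so $a = a_n$ for some $n$, and I would take $n$ minimal. Assuming toward a contradiction that $a_n \notin A$, I can write $a_n = \sum_{i<n} f_i a_i$ with $f_i \in \nn_0$. If $\sum_{i<n} f_i = 1$, then $a = a_j$ for some $j < n$, contradicting minimality; if $\sum_{i<n} f_i \ge 2$, then $a$ is again a nontrivial sum of positive elements, contradicting atomicity.

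Finally, to conclude that $P$ is atomic, I would verify $a_n \in \langle A \rangle$ by strong induction on $n$: either $a_n \in A$ outright, or $a_n \in \langle a_1,\dots,a_{n-1}\rangle \subseteq \langle A \rangle$ by the inductive hypothesis. Since the generators lie in $\langle A \rangle = \langle \mathcal{A}(P)\rangle$, so does all of $P$. There is no real obstacle beyond keeping careful track of coefficients; the only subtlety worth mentioning is the need to take the minimal $n$ with $a = a_n$ in the second containment, which handles any possible repetitions in a non-strictly increasing enumeration.
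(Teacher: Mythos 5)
Your proof is correct, and it is essentially the standard argument: the paper itself gives no proof of this proposition, deferring to \cite[Proposition~3.2]{GG16} with the remark that the proof transfers \emph{mutatis mutandis} to an arbitrary ordered field, and your key observation (any representation of $a_n$ can only involve generators not exceeding $a_n$, hence is supported on an initial segment) is exactly the mechanism that makes that transfer work. The only quibble is cosmetic: in the step ``forcing $e_n=1$ and $e_i=0$ for $i<n$'' you should either assume all $a_i>0$ (discarding any zero terms of the generating sequence, which is harmless) or note that indices with $a_i=0$ contribute nothing, but this does not affect the validity of the argument.
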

	
	We say that a subset $S$ of an ordered field is \emph{increasing} (resp., \emph{decreasing}) if it is the underlying set of an increasing (resp., decreasing) sequence. If $S$ is either increasing or decreasing, then we say that it is \emph{monotone}. Clearly, any monotone subset of an ordered field must be (at most) countable.
	
	\begin{lemma} \label{lem:properties of monotone sets}
		A subset of an ordered field is both increasing and decreasing if and only if it is finite.
	\end{lemma}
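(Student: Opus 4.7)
The plan is to establish both directions separately, with the backward direction being a direct construction and the forward direction reducing to a well-ordering argument on $S$.

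For the backward direction, if $S$ is finite I list $S = \{s_1 < \dots < s_k\}$ and pad to an $\nn$-indexed sequence by repetition: the sequence $s_1, s_2, \dots, s_k, s_k, s_k, \dots$ is increasing with underlying set $S$, and the obvious reversal yields a decreasing one. Hence $S$ is simultaneously increasing and decreasing.

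For the forward direction, the crucial observation is that the underlying set of an increasing sequence $\{a_n\}$ is necessarily well-ordered: given a nonempty $T \subseteq S$, the integer $n_0 = \min\{n \in \nn : a_n \in T\}$ exists by the well-ordering of $\nn$, and the monotonicity of $\{a_n\}$ then forces $a_{n_0}$ to be the minimum of $T$. A symmetric argument applied to a decreasing sequence with underlying set $S$ shows every nonempty subset of $S$ also has a maximum. To conclude I would argue by contradiction: if $S$ were infinite, then recursively taking minima of $S \setminus \{s_1, \dots, s_n\}$ would yield a strictly increasing infinite sequence $s_1 < s_2 < \dots$ inside $S$ whose underlying set is a nonempty subset of $S$ with no maximum, contradicting the dual property derived above. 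Therefore $S$ must be finite.

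The whole argument amounts to bookkeeping about how monotonicity of an $\nn$-indexed sequence interacts with the order structure of its underlying set, so I do not foresee a substantive obstacle. The only mild subtlety is whether ``increasing'' in the paper's convention is strict or weak: the ``smallest index'' step uses only the weak comparison $a_{n_0} \le a_m$ whenever $m \ge n_0$, so the well-ordering conclusion, and hence the whole proof, goes through uniformly under either convention.
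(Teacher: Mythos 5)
Your proof is correct, but it takes a somewhat more elaborate route than the paper's. The backward direction (padding a finite set by repetition) is identical. For the forward direction, the paper argues in two short steps: a decreasing enumeration of $S$ begins with $\max S$, so $S$ has a maximum; and any increasing sequence with underlying set $S$ must, once it reaches $\max S$ at some index $N$, stabilize there, so $S = \{a_1, \dots, a_N\}$ is finite. You instead prove the stronger structural fact that the underlying set of an increasing sequence is well-ordered (and dually, that every nonempty subset of a decreasing set has a maximum), and then derive a contradiction by recursively extracting an infinite strictly ascending chain with no maximum. Both arguments are elementary and your well-ordering observation is a nice reusable fact, but the paper's version avoids the recursion entirely by only needing the maximum of $S$ itself rather than of every nonempty subset. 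Your closing remark on the strict-versus-weak convention is apt: the paper's own padding construction makes clear that ``increasing'' is meant weakly, and your ``smallest index'' step indeed only uses the weak inequality, so the argument is robust either way.
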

	
	\begin{proof}
		Let $K$ be an ordered field, and let $S$ be a subset of $K$. Suppose first that~$S$ is increasing and decreasing. The fact that $S$ is decreasing implies that $S$ has a maximum element, namely, the first element of any decreasing sequence of $K$ with underlying set~$S$. Notice now that every increasing sequence with underlying set $S$ must stabilize at $\max S$. Hence $S$ is finite. On the other hand, if $S$ is finite, then it is increasing and decreasing; this is because we can increasingly (resp., decreasingly) enumerate the elements of $S$ as the first $|S|$ elements of a sequence and then complete the rest of the sequence taking copies of $\max S$ (resp., $\min S$).
	\end{proof}
	
	 Let $P$ be a positive monoid of some ambient ordered field. By Lemma~\ref{lem:properties of monotone sets}, if $P$ is finitely generated, then it is increasing and decreasing. On the other hand, suppose that $P$ is both increasing and decreasing. By Proposition~\ref{prop:increasing are atomic}, one finds that $P$ is atomic. Since $\mathcal{A}(P)$ is contained in every generating set, it is increasing and decreasing. Lemma~\ref{lem:properties of monotone sets} now implies that $\mathcal{A}(P)$ is finite and, therefore, $P$ is finitely generated. Hence the next result holds.
	 
	 \begin{proposition} \label{prop:f.g. iff increasing and decreasing}
	 	A positive monoid of an ordered field is finitely generated if and only if it is increasing and decreasing.
	 \end{proposition}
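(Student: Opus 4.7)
The plan is to prove the two implications separately, leaning heavily on Lemma~\ref{lem:properties of monotone sets} and Proposition~\ref{prop:increasing are atomic}, which together reduce the statement to an observation about atom sets.

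For the forward direction, suppose $P$ is finitely generated, say $P = \langle b_1, \dots, b_k \rangle$. Then the generating set $\{b_1, \dots, b_k\}$ is finite, and Lemma~\ref{lem:properties of monotone sets} tells us any finite subset of an ordered field is the underlying set of both an increasing and a decreasing sequence (enumerate in increasing or decreasing order and then repeat the last term ad infinitum). Hence $P$ is both increasing and decreasing.

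For the backward direction, suppose that $P$ is both increasing and decreasing. Proposition~\ref{prop:increasing are atomic} immediately gives atomicity of $P$. I would then invoke the fact, which holds in any reduced monoid, that $\mathcal{A}(P)$ is contained in every generating set of $P$: if $a \in \mathcal{A}(P)$ and $a = \sum c_i s_i$ with $s_i$ in a generating set, then atomicity of $a$ together with reducedness forces $a$ to equal some $s_i$. Letting $\{a_n\}$ and $\{b_n\}$ be respectively an increasing and a decreasing sequence generating $P$, we obtain $\mathcal{A}(P) \subseteq \{a_n \mid n \in \nn\}$ and $\mathcal{A}(P) \subseteq \{b_n \mid n \in \nn\}$. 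A subset of the underlying set of an increasing (resp., decreasing) sequence is again the underlying set of such a sequence: simply select the terms in the subset to form a subsequence, which is monotone in the same direction, and pad with copies of its maximum (resp., minimum) if it turns out to be finite. Therefore $\mathcal{A}(P)$ is both increasing and decreasing, and Lemma~\ref{lem:properties of monotone sets} forces it to be finite. Atomicity then yields $P = \langle \mathcal{A}(P) \rangle$, so $P$ is finitely generated.

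The only mildly subtle point, and what I would treat most carefully in writing this up, is the passage from monotonicity of a set to monotonicity of a subset (in order to conclude that $\mathcal{A}(P)$ inherits both properties from the two chosen generating sequences). Everything else is a direct citation of the two previously established results, so I do not anticipate any real obstacle.
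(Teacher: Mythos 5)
Your proof is correct and follows essentially the same route as the paper: the forward direction via the ``finite $\Rightarrow$ monotone'' half of Lemma~\ref{lem:properties of monotone sets}, and the backward direction via Proposition~\ref{prop:increasing are atomic} together with the observation that $\mathcal{A}(P)$ sits inside every generating set and therefore inherits both monotonicity properties, forcing finiteness by the lemma. The only difference is that you spell out the (correct) detail that a subset of a monotone set is monotone, which the paper leaves implicit.
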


	A positive monoid of an ordered field $K$ is \emph{weakly increasing} if it can be generated by a bounded increasing sequence of $K$. A weakly increasing positive monoid is, in particular, increasing. A Puiseux monoid of $\qq$ is both strongly and weakly increasing if and only if it is isomorphic to a numerical semigroup (see \cite[Proposition~3.7]{GG16}). This fact does not extend to positive monoids of an arbitrary ordered field, as the next proposition indicates.
	
	\begin{proposition} \label{prop: f.g. implies strongly increasing iff Archimedean}
		Let $K$ be an ordered field, and let $P$ be a finitely generated positive monoid of $K$. Then $P$ is strongly increasing if and only if $K$ is Archimedean.
	\end{proposition}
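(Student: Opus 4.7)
The plan is to prove the two implications separately, with the non-Archimedean direction handled by a short boundedness argument and the Archimedean direction by explicitly exhibiting an unbounded increasing generating sequence.

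For the $(\Rightarrow)$ implication I would argue the contrapositive: if $K$ is non-Archimedean, then a finite generating set $\{a_1, \dots, a_m\}$ of $P$ has a maximum $M = \max_i a_i$, and picking any positive infinitesimal $\epsilon$ of $K$ and setting $L := M/\epsilon$ gives $nM < L$ for every $n \in \nn$ (this inequality is literally $n\epsilon < 1$). Any $p = \sum_{i=1}^m c_i a_i \in P$ with $c_i \in \nn_0$ then satisfies $p \le (c_1 + \dots + c_m) M < L$, so $P$ is bounded above by $L$ in $K$. In particular, no sequence in $P$ can be unbounded in $K$, hence $P$ fails to be strongly increasing.

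For the $(\Leftarrow)$ implication, assume $K$ is Archimedean and $P$ is a (nontrivial) finitely generated positive monoid. Because $P$ is reduced, its atom set is contained in every generating set, so $\mathcal{A}(P)$ is finite (cf.\ Proposition~\ref{prop:a PM is atomic iff it has a minimal generating set}); order its elements as $a_1 < a_2 < \dots < a_n$. I would then produce the concatenated sequence
\[
	a_1,\ a_2,\ \dots,\ a_n,\ 2a_n,\ 3a_n,\ 4a_n,\ \dots,
\]
observe that it is strictly increasing (since $a_n > 0$), that its underlying set contains $\mathcal{A}(P)$ and so generates $P$, and that it is unbounded in $K$ precisely because the Archimedean property applied to $a_n > 0$ pushes $\{k a_n\}_{k \in \nn}$ past every element of $K$.

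I do not expect any serious obstacle: the only real step is converting the hypothesis ``$K$ is non-Archimedean'' into the usable statement ``there is a single element $L \in K$ dominating every positive integer multiple of $M$'', which is exactly what a positive infinitesimal provides. A cosmetic caveat is that the statement tacitly excludes the trivial monoid $P = \{0\}$, which is finitely generated but admits no unbounded sequence whatsoever; this matches the convention in the analogous result for Puiseux monoids of $\qq$ quoted just before the proposition.
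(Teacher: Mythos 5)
Your proof is correct and follows essentially the same route as the paper: in the non-Archimedean case you show $P$ is bounded above, and in the Archimedean case you exhibit the explicit unbounded increasing generating sequence $a_1,\dots,a_n,2a_n,3a_n,\dots$, exactly as the paper does. The only (cosmetic) difference is that your single upper bound $M/\epsilon$ obtained from an infinitesimal replaces the paper's case split on whether the largest generator is finite or infinitely large, which is a mild streamlining rather than a different argument.
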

	
	\begin{proof}
		For the forward implication suppose, by way of contradiction, that $K$ is not Archimedean. Take $k \in \nn$ and $a_1, \dots, a_k \in K^+$ such that $P = \langle a_1, \dots, a_k \rangle$ and $0 < a_1 < \dots < a_k$. If $a_k$ were finite, then $P \subseteq K_\#$ and, therefore, any infinitely large element in $K^+$ would be an upper bound for $P$, a contradiction. Thus, assume that $a_k$ is infinitely large. In this case, for all coefficients $n_1, \dots, n_k \in \nn_0$,
		\[
			\sum_{i=1}^k n_i a_i \le kNa_k < a_k^2,
		\]
		where $N = \max\{n_1, \dots, n_k\}$. Since $a_k^2$ is an upper bound for $P$, it follows that $P$ is not strongly increasing, which is a contradiction. Hence $K$ must be Archimedean.
		
		For the reverse implication assume that $K$ is Archimedean. Once again, take $k \in \nn$ and $a_1, \dots, a_k \in K^+$ such that $0 < a_1 < \dots < a_k$ and $P = \langle a_1, \dots, a_k \rangle$. Define the sequence $\{u_n\}$ of $K$ by setting $u_i = a_i$ for $i \in \{1, \dots, k\}$ and $u_n = n a_k$ for $n > k$. The sequence $\{u_n\}$ is increasing and generates $P$. Since $K$ is Archimedean, for every $x \in K^+$ there exists $n \in \nn$ such that $u_n = n a_k > x$. As $P$ is generated by the unbounded increasing sequence $\{u_n\}$, it is a strongly increasing positive monoid, which completes the proof.
	\end{proof}

	We have seen in Proposition~\ref{prop: positive moniods in Archimedean fields are atomic if they don't have zero as a limit point} that a positive monoid $P$ of an Archimedean ordered field is a BF-monoid provided that $P^\bullet$ does not have $0$ as a limit point. Strengthening the hypothesis of this result, we can guarantee that $P$ is, in fact, an FF-monoid. This is the main result of this paper.
	
	\begin{theorem} \label{thm:increasing PM of Archimedean fields are FF}
		Every increasing positive monoid of an ordered field is an \emph{FF}-monoid.
	\end{theorem}
	
	The increasing condition in Theorem~\ref{thm:increasing PM of Archimedean fields are FF} is not superfluous, as we now illustrate by providing an example of an atomic Puiseux monoid that is not even a BF-monoid.
	
	\begin{example} \label{ex:decreasing PM that is not BF}
		Let $\{p_n\}$ be an increasing enumeration of the prime numbers. In the ambient field $\qq$, consider the Puiseux monoid
		\[
			P = \langle A \rangle, \ \text{ where } \ A = \bigg \{ \frac 1{p_n} \ \bigg{|} \ n \in \nn \bigg \}.
		\]
		It is easy to check that $P$ is an atomic monoid with $\mathcal{A}(P) = A$. Since $A$ is infinite and~$P$ is decreasing, Proposition~\ref{prop:f.g. iff increasing and decreasing} implies that $P$ is not increasing. In addition, $P$ is not a BF-monoid as $1$ is the sum of $p_n$ copies of the atom $1/p_n$ for every natural number $n$. In particular, $P$ is not an FF-monoid.
	\end{example}
	
	On the other hand, the converse of Theorem~\ref{thm:increasing PM of Archimedean fields are FF} is not true; the following example sheds light upon this observation.
	
	\begin{example} \label{ex:FF PM that is not increasing}
		Let $\{p_n\}$ be a strictly increasing sequence of primes, and consider the Puiseux monoid of $\qq$ defined as follows:
		\begin{equation} \label{eq:FF does not implies increasing}
			P = \langle A \rangle, \ \text{where} \ A = \bigg\{\frac{p_{2n}^2 + 1}{p_{2n}}, \frac{p_{2n+1} + 1}{p_{2n+1}} \ \bigg{|} \ n \in \nn \bigg\}.
		\end{equation}
		Since $A$ is an unbounded subset of $\rr$ having $1$ as a limit point, it cannot be increasing. In addition, as $\mathsf{d}(a) \neq \mathsf{d}(a')$ for all $a,a' \in A$ such that $a \neq a'$, every element of $A$ is an atom of $P$. Thus, every generating set of $P$ must contain $A$. Now the fact that $A$ is not increasing implies that $P$ is not an increasing positive monoid.
		
		We verify now that $P$ is an FF-monoid. Fix $x \in P$ and then take $D_x$ to be the set of prime numbers dividing $\mathsf{d}(x)$. Now choose a natural number $N$ large enough such that $N > \max \{x, \mathsf{d}(x)\}$. For each $a \in A$ such that $\mathsf{d}(a) > N$, the number of copies $\alpha$ of the atom $a$ showing in any $z \in \mathsf{Z}(x)$ must be a multiple of $\mathsf{d}(a)$ because $\mathsf{d}(a) \notin D_x$. Therefore $\alpha = 0$; otherwise, $x \ge \alpha a \ge \mathsf{d}(a)a > \mathsf{d}(a) > x$. Thus, if an atom $a$ divides $x$ in $P$, then $\mathsf{d}(a) \le N$. As a result, only finitely many elements of $\mathcal{A}(P)$ divide $x$ in $P$. This implies that $\mathsf{Z}(x)$ is finite. Hence $P$ is an FF-monoid that fails to be increasing.
	\end{example}
	\medskip
	 
	 \section{Proof of the Main Theorem} \label{sec:proof of main theorem}
	 
	 In this section, we prove our main result, Theorem~\ref{thm:increasing PM of Archimedean fields are FF}. First, let us verify two technical results that are crucial for its proof.
	 
	 \begin{lemma} \label{lem:f.g. ordered monoids do not contain strictly decreasing sequences}
	 	A finitely generated positive monoid does not contain a strictly decreasing sequence.
	 \end{lemma}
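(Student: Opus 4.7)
The plan is to reduce the claim to Dickson's Lemma on $\nn_0^k$. Suppose $P = \langle a_1, \ldots, a_k \rangle$ is finitely generated in an ambient ordered field $K$; after discarding any generator equal to zero, we may assume $a_1, \ldots, a_k \in K_{> 0}$. Every element of $P$ then admits at least one representation of the form $\sum_{i=1}^k c_i a_i$ with $(c_1, \ldots, c_k) \in \nn_0^k$, although such a representation need not be unique.

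For contradiction, assume $\{x_n\}$ is a strictly decreasing sequence in $P$. For each $n$, I would choose any factorization of $x_n$ as a nonnegative integer combination of $a_1, \ldots, a_k$ and record its coefficient vector $\mathbf{c}_n = (c_{n,1}, \ldots, c_{n,k}) \in \nn_0^k$. Dickson's Lemma asserts that $\nn_0^k$ is a well-partial-order under componentwise comparison; consequently, the infinite sequence $\{\mathbf{c}_n\}$ must contain two indices $m < n$ with $c_{m,i} \le c_{n,i}$ for every $i \in \{1,\ldots,k\}$. Since each $a_i$ lies in the positive cone of $K$, this componentwise domination passes through to $P$:
\[
    x_m \;=\; \sum_{i=1}^k c_{m,i}\, a_i \;\le\; \sum_{i=1}^k c_{n,i}\, a_i \;=\; x_n,
\]
in direct contradiction with the strict decrease $x_m > x_n$ that $m < n$ forces on $\{x_n\}$.

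The only mild subtlety is that in a non-Archimedean ambient field a single element of $P$ may admit several distinct coefficient vectors; but this plays no role, since the argument requires nothing more than the existence of one factorization per $x_n$. Dickson's Lemma handles all the combinatorial content, so I expect no genuine obstacle beyond correctly articulating the reduction to $\nn_0^k$.
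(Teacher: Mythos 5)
Your proof is correct, and it takes a genuinely different route from the paper's. The paper argues by minimal counterexample on the number of atoms: assuming a finitely generated positive monoid with atoms $a_1 < \dots < a_m$ contains a strictly decreasing sequence $\{s_n\}$, it shows that each coefficient sequence $\{\alpha_{nj}\}$ must tend to infinity (otherwise one could subtract off a fixed multiple of an atom and obtain a strictly decreasing sequence in a monoid with fewer atoms), and then finds an index $N$ with $\alpha_{Nj} > \alpha_{1j}$ for all $j$, forcing $s_N > s_1$. Your appeal to Dickson's Lemma packages exactly this combinatorial content: the existence of indices $m < n$ with $\mathbf{c}_m \le \mathbf{c}_n$ componentwise is precisely what the paper extracts by hand, and the passage from componentwise domination to $x_m \le x_n$ uses only that the $a_i$ lie in the nonnegative cone, so the order-compatibility of addition in $K$ does the rest. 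Your version is shorter and avoids the slightly delicate simultaneous-divergence step in the paper (where one must argue that all $m$ coefficient sequences diverge, not just one at a time); the paper's version is self-contained, essentially reproving the needed instance of Dickson's Lemma inline via the well-ordering of the atom count. Your observation that non-uniqueness of representations is harmless is also correct: one coefficient vector per term suffices.
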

	 
	 \begin{proof}
	 	 Assume, by way of contradiction, that there exists a nonempty family $\mathcal{F}$ of finitely generated positive monoids containing strictly decreasing sequences. Among the members of $\mathcal{F}$ take a positive monoid $P$ such that $|\mathcal{A}(P)| = \min \{|\mathcal{A}(F)| : F \in \mathcal{F}\}$. Let $K$ be an ambient ordered field for $P$. By Proposition~\ref{prop:a PM is atomic iff it has a minimal generating set}, one has that $P$ is atomic. Let $\mathcal{A}(P) = \{a_1, \dots, a_m\}$, where $m \in \nn$ and $a_1 < \dots < a_m$. Also, take $\{s_n\}$ to be a strictly decreasing sequence of $K^+$ contained in $P$. For every $n \in \nn$ write
	 	 \[
		 	 s_n = \alpha_{n1}a_1 + \dots + \alpha_{nm}a_m
		 \]
		 for some $\alpha_{ij} \in \nn_0$. If for $\alpha \in \nn$, there is a strictly increasing sequence of natural numbers $\{k_n\}$ such that $\alpha_{k_n1} = \alpha$, then taking $s'_n = s_{k_n}$ we would find that $\{s'_n - \alpha a_1\}$ is a strictly decreasing sequence contained in the finitely generated positive monoid $\langle a_2, \dots, a_n \rangle$, contradicting the minimality of $|\mathcal{A}(P)|$. As a result, $\lim_{n \to \infty} \{\alpha_{n1}\} = \infty$. Similarly, we can argue that $\lim_{n \to \infty} \alpha_{nj} = \infty$ for each $j=2,\dots,m$. This implies that there exists a natural number $N>1$ such that $\alpha_{Nj} > \alpha_{1j}$ for each $j=2,\dots,m$. As a result, $s_N > s_1$, which contradicts the fact that $\{s_n\}$ is decreasing.
	 \end{proof}
	 
	  If $M$ is an atomic monoid and $N$ is an atomic submonoid of $M$, then for $x \in N$ the set $\mathsf{Z}(x)$ depends on whether we consider $x$ as an element in $M$ or $N$. The same is true for the set $\mathsf{L}(x)$. When there is some risk of confusion, we write $\mathsf{Z}_M(x)$ (resp., $\mathsf{Z}_N(x)$) to refer the factorization set of $x$ in $M$ (resp., $N$). We use the notations $\mathsf{L}_M(x)$ and $\mathsf{L}_N(x)$ with the same intension. Recall that the submonoid $N$ of $M$ is called divisor-closed if for every $a \in N$ and $d \in M$ the fact that $d \mid_M a$ implies that $d \in N$. 
	  
	  \begin{lemma} \label{lem:union of nested divisor-closed FF is FF}
	  	Let $M$ be an atomic reduced monoid, and let $\{M_n\}$ be a sequence of divisor-closed submonoids of $M$ such that
	  	\[
		  	M = \bigcup_{n \in \nn} M_n.
	  	\]
	  	If every $M_n$ is an \emph{FF}-monoid, then $M$ is also an \emph{FF}-monoid.
	  \end{lemma}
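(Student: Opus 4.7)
The plan is to exploit the divisor-closedness of each $M_n$ to show that, for any element $x \in M$, its factorizations in $M$ are exactly its factorizations in any $M_n$ that happens to contain it. Once this is established, the FF property of $M_n$ transfers directly to $M$.

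Fix $x \in M$. Since $M = \bigcup_{n \in \nn} M_n$, there is some $N \in \nn$ with $x \in M_N$. The first key observation to verify is that $\mathcal{A}(M_N) = \mathcal{A}(M) \cap M_N$. For the inclusion $\mathcal{A}(M) \cap M_N \subseteq \mathcal{A}(M_N)$, if $a \in M_N$ is an atom of $M$ and $a = y + z$ with $y,z \in M_N \subseteq M$, then one of $y,z$ is a unit in $M$, hence (since $M$ is reduced) equals $0$, so $a$ is an atom of $M_N$. For the reverse inclusion, if $a \in M_N$ is an atom of $M_N$ and $a = y + z$ with $y,z \in M$, then $y \mid_M a$ and $z \mid_M a$, so by divisor-closedness $y, z \in M_N$, and the assumption that $a$ is an atom of $M_N$ forces one of them to be $0$.

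Next I would argue that $\mathsf{Z}_M(x) = \mathsf{Z}_{M_N}(x)$. The inclusion $\mathsf{Z}_{M_N}(x) \subseteq \mathsf{Z}_M(x)$ is immediate from $\mathcal{A}(M_N) \subseteq \mathcal{A}(M)$. For the converse, take $z = a_1 \cdots a_k \in \mathsf{Z}_M(x)$ with each $a_i \in \mathcal{A}(M)$; then each $a_i$ divides $x$ in $M$, so divisor-closedness of $M_N$ yields $a_i \in M_N$, and by the identification of atoms above, $a_i \in \mathcal{A}(M_N)$. Hence $z \in \mathsf{Z}_{M_N}(x)$.

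Finally, since $M_N$ is an FF-monoid, $\mathsf{Z}_{M_N}(x)$ is finite, and therefore so is $\mathsf{Z}_M(x)$. As $x \in M$ was arbitrary, $M$ is an FF-monoid. There is no serious obstacle here; the only subtlety is being careful with the two-sided use of divisor-closedness in establishing $\mathcal{A}(M_N) = \mathcal{A}(M) \cap M_N$, which is what makes the whole argument pass through.
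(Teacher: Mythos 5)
Your proposal is correct and follows essentially the same route as the paper: locate $x$ in some $M_N$, use divisor-closedness to pull every atom showing in an $M$-factorization of $x$ into $M_N$, identify it as an atom of $M_N$, and conclude via the FF property of $M_N$. The only difference is that you prove the full equality $\mathcal{A}(M_N) = \mathcal{A}(M) \cap M_N$ and $\mathsf{Z}_M(x) = \mathsf{Z}_{M_N}(x)$, whereas the paper only needs (and only proves) the inclusions $\mathcal{A}(M) \cap M_N \subseteq \mathcal{A}(M_N)$ and $\mathsf{Z}_M(x) \subseteq \mathsf{Z}_{M_N}(x)$.
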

	  
	  \begin{proof}
		  Since $M$ is reduced, $M_n$ is reduced for each $n \in \nn$. Let $x$ be an element of~$M$. Since $M$ is the union of the $M_n$'s, we have that $x \in M_n$ for some $n \in \nn$. Take $z \in \mathsf{Z}_M(x)$. Since $M_n$ is divisor-closed, every atom of $M$ showing in $z$ belongs to $M_n$. The fact that $\mathcal{A}(M) \cap M_n \subseteq \mathcal{A}(M_n)$ now implies that $z \in \mathsf{Z}_{M_n}(x)$. Consequently, $\mathsf{Z}_M(x) \subseteq \mathsf{Z}_{M_n}(x)$. Since $M_n$ is an FF-monoid, the set of factorizations $\mathsf{Z}_{M_n}(x)$ is finite, which implies that $\mathsf{Z}_M(x)$ is also finite. Since $x$ was arbitrarily taken, it follows that~$M$ is an FF-monoid.
	 \end{proof}

	Let $M$ be a reduced atomic monoid, and let $A$ be a subset of $\mathcal{A}(M)$. For $z \in \mathsf{Z}(M)$, we let $|z|_A$ denote the number of atoms in $A$ showing in $z$ (counting repetition). Note that $|\cdot|$ and $|\cdot|_A$ are the same if and only if $A = \mathcal{A}(M)$. Since $\mathsf{Z}(M)$ is free on $\mathcal{A}(M)$, there exists a unique monoid homomorphism
	\[
		\phi_A \colon \mathsf{Z}(M) \to M
	\]
	such that $\phi_A(a) = a$ if $a \in A$ and $\phi_A(a) = 0$ if $a \in \mathcal{A}(M) \setminus A$; we call $\phi_A$ the \emph{factorization homomorphism restricted to} $A$.
	
	 We are now in a position to prove our main result. \\
	
	\noindent {\it Proof of Theorem~\ref{thm:increasing PM of Archimedean fields are FF}:}
		Let $K$ be an ordered field, and let $P$ be an increasing positive monoid of $K$. Since $P$ is increasing, it must be atomic by Proposition~\ref{prop:increasing are atomic}. If $P$ is finitely generated, then it follows by Proposition~\ref{prop:f.g. atomic monoids are FF-monoids} that $P$ is an FF-monoid. Therefore let us assume that $P$ is not finitely generated, that is, $|\mathcal{A}(P)| = \infty$. Take $\{a_n\}$ to be a strictly increasing sequence of $K$ with underlying set $\mathcal{A}(P)$. Let $\beta \colon K^\times \to \Gamma_K$ be the Archimedean valuation of $K$ (see Section~\ref{sec:ordered fields}). Because $\{a_n\}$ increases, it follows that $\beta(a_{n+1}) \preceq \beta(a_n)$ for every $n \in \nn$.
		
		We show first that $P$ is an FF-monoid when the set $\{\beta(a_n) \mid n \in \nn\}$ of Archimedean classes is finite. Let us assume, by way of contradiction, that $P$ is not an FF-monoid. Choose $x \in P^\bullet$ such that $\mathsf{Z}(x)$ contains infinitely many factorizations. Take the minimum $N \in \nn$ such that $\beta(a_n) = \beta(a_m)$ for all $n,m \ge N$. Now set
		\[
			A = \{a_j \mid j \ge N\}.
		\]
		Since $a + a' = {\bf O}(\max\{a,a'\})$ for all $a,a' \in K^+$ and each element $y \in P^\bullet$ can be written as the sum of atoms, it follows that $\beta(a_N) \preceq \beta(y)$ for all $y \in P$. As a result, there exists a smallest positive integer $N'$ such that $N' a_N \ge x$. If for some $j \ge N$ the atom~$a_j$ shows in infinitely many factorizations of $x$, we can replace $x$ by $x - a_j$ and still preserve the fact that $|\mathsf{Z}(x)| = \infty$. Such a replacement can happen at most $N'$ times; otherwise, there would exist a sequence $\{c_n\} \in \nn_0^\infty$ with $\sum_{j \ge N} c_j \ge N'$ such that
		\[
			x \ge \sum_{j \ge N} c_j a_j \ge \sum_{j \ge N} c_j a_N > N' a_N \ge x.
		\]
		Therefore we can assume that for every $j \ge N$ the atom $a_j$ shows in only finitely many factorizations in $\mathsf{Z}(x)$. Since $|\mathsf{Z}(x)| = \infty$ and every factorization in $\mathsf{Z}(x)$ contains at most $N'$ copies of atoms in $A$, there exists $n_0 \le N'$ such that the set
		\[
			Z = \{z \in \mathsf{Z}(x) : |z|_A = n_0\}
		\]
		is infinite. As for every $j \ge N$ the atom $a_j$ shows in only finitely many factorizations of $x$, we can construct a sequence of factorizations $\{z_n\}$ in $Z$ such that $\{\phi_A(z_n)\}$ is a strictly increasing sequence of $P$: take $z_1 \in Z$ arbitrarily and, once we have constructed $\{z_1, \dots, z_{n-1}\}$ such that $\phi_A(z_1) < \dots < \phi_A(z_{n-1})$, take $z_n \in Z$ such that every atom of $A$ showing in $z_n$ is strictly greater than the maximum atom showing in $z_{n-1}$. As any two factorizations in $Z$ have the same number of atoms contained in $A$, one sees that $\phi_A(z_{n-1}) < \phi_A(z_n)$. Therefore $\{x - \phi_A(z_n)\}$ is a strictly decreasing sequence in $\langle a_1, \dots, a_{N-1} \rangle$, which contradicts Lemma~\ref{lem:f.g. ordered monoids do not contain strictly decreasing sequences}.
	 	 
	 	To complete the proof, let us verify that $P$ is an FF-monoid when the set of Archimedean classes $\{\beta(a_n) \mid n \in \nn\}$ contains infinitely many elements. Let $\{s_n\}$ be a strictly increasing sequence of natural numbers with $s_1 = 1$ so that $\beta(a_i) = \beta(a_j)$ if and only if $s_n \le i,j \le s_{n+1}-1$ for some $n \in \nn$. Set
	 	\[
		 	F_n = \langle a_1, \dots, a_{s_{n+1}-1} \rangle
	 	\]
	 	for every $n \in \nn$. By Proposition~\ref{prop:f.g. atomic monoids are FF-monoids}, each $F_n$ is an FF-monoid. Now we verify that each $F_n$ is a divisor-closed submonoid of $P$. If $y \in F_n$, then there are nonnegative integer coefficients $n_1, \dots, n_{s_{n+1}-1}$ such that
	 	\[
		 	y = \sum_{i=1}^{s_{n+1}-1} n_i a_i \le (s_{n+1} - 1)N a_{s_{n+1} - 1} < a_j	 
	 	\]
	 	for every $j \ge s_{n+1}$, where $N = \max\{n_1, \dots, n_{s_{n+1} - 1}\}$; this is because $\beta(a_j) \prec \beta(a_{s_{n+1} - 1})$ when $j \ge s_{n+1}$. Therefore no atoms of $P$ contained in the complement of $F_n$ divides $y$ in $P$. As a result, $F_n$ is a divisor-closed submonoid of $P$. Since $P$ is the union of the $F_n$'s, it follows by Lemma~\ref{lem:union of nested divisor-closed FF is FF} that $P$ is an FF-monoid. \hfill \qed

	We say that a monoid $M$ is \emph{hereditarily atomic} if each submonoid of $M$ is atomic. As the next proposition indicates, in the family of positive monoids, being hereditarily atomic is a consequence of being a BF-monoid.
	
	\begin{proposition} \label{prop:BF implies hereditarily atomicity}
		Every positive \emph{BF}-monoid of an ordered field is hereditarily atomic.
	\end{proposition}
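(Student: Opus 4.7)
The plan is to fix an arbitrary submonoid $N$ of the positive BF-monoid $P$ and show that every $x \in N^\bullet$ can be written as a finite sum of atoms of $N$. I will argue by contradiction, so I would call an element $x \in N^\bullet$ \emph{defective} if it is \emph{not} expressible as such a sum, and assume for contradiction that at least one defective element exists.

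The first step is a simple propagation observation: if $x \in N^\bullet$ is defective, then $x$ is not an atom of $N$, so $x = y + w$ with $y,w \in N^\bullet$, and moreover at least one of $y,w$ must itself be defective, since otherwise $x$ would be a finite sum of atoms of $N$. I would use this to recursively build, starting from a defective $x_0 := x$, sequences $\{x_n\}_{n \ge 0}$ and $\{w_n\}_{n \ge 1}$ in $N^\bullet$ such that each $x_n$ is defective and $x_{n-1} = x_n + w_n$. Since $w_n \in N^\bullet \subseteq K_{>0}$, the sequence $\{x_n\}$ is strictly decreasing in $K$, and telescoping yields
\[
    x \;=\; x_n + w_1 + w_2 + \cdots + w_n \qquad \text{for every } n \in \nn.
\]

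To reach a contradiction, I would translate these long decompositions in $N$ into long factorizations in $P$: for each fixed $n$ the display above writes $x$ as a sum of $n+1$ nonzero elements of $P$, and since $P$ is atomic (being a BF-monoid), each of these $n+1$ summands can be refined into a sum of at least one atom of $P$. Concatenating these refinements produces an element of $\mathsf{Z}_P(x)$ of length at least $n+1$, so $\mathsf{L}_P(x)$ contains arbitrarily large integers, contradicting the assumption that $P$ is a BF-monoid. Hence no element of $N^\bullet$ is defective, and $N$ is atomic.

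The only place requiring any care is the propagation step, which is the key to making the recursion work, but it is essentially a one-line observation. The rest of the argument is bookkeeping; in particular, no appeal to K\"onig's lemma or to any structure of the ambient ordered field beyond the positivity of elements of $N^\bullet$ should be needed.
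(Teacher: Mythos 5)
Your argument is correct and rests on the same key observation as the paper's own proof: any decomposition of $x$ into $n$ nonzero elements of the submonoid refines to a factorization of $x$ in $P$ of length at least $n$, so the BF property of $P$ bounds $n$. The paper runs this directly (the bound lets one take a decomposition of maximal length, whose summands are then forced to be atoms of the submonoid), whereas you package the same idea as an infinite descent by contradiction; the two formulations are equivalent.
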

	
	\begin{proof}
		Let $K$ be an ordered field, and let $P$ be a positive BF-monoid of $K$. In particular,~$P$ is atomic. Let $P'$ be a submonoid of $P$. Observe that every element of $P'$ that cannot be written as a sum of two elements in $P'^\bullet$ belongs to $\mathcal{A}(P')$. Take $x \in P'^\bullet$. Since $P$ is a BF-monoid, $\mathsf{L}_{P}(x)$ is finite, and so there exists $N \in \nn$ such that $|z| < N$ for all $z \in \mathsf{Z}_P(x)$. Now let us write
		\begin{equation} \label{eq:BF implies hereditarily atomicity}
			x = x'_1 + \dots + x'_n
		\end{equation}
		for some $n \in \nn$ and $x'_1, \dots, x'_n \in P'^\bullet$. Since each $x'_i$ belongs to $P^\bullet$, it follows that $x$ can be written as the sum of at least $n$ atoms of $P$. This implies that $n \le N$, and so~$x$ can be expressed as the sum of at most $N$ elements of $P'^\bullet$. Then we can choose~$n$ in \eqref{eq:BF implies hereditarily atomicity} to be maximal. In this case, each $x'_i$ must be an atom of $P'$. This implies that~$P'$ is atomic. Because the submonoid $P'$ of $P$ was arbitrarily taken, $P$ happens to be hereditarily atomic.
	\end{proof}
	
	The converse of Proposition~\ref{prop:BF implies hereditarily atomicity} does not hold. For example, according to \cite[Theorem~5.5]{GG16}, if $\{p_n\}$ is an enumeration of the prime numbers, then the Puiseux monoid $P = \langle 1/p_n \mid n \in \nn \rangle$ is hereditarily atomic. However, for every $n \in \nn$, the element $1$ is the sum of $p_n$ copies of the atom $1/p_n$ and, therefore, $P$ is not a BF-monoid. On the other hand, the positive condition on Proposition~\ref{prop:BF implies hereditarily atomicity} is not superfluous. For instance, the multiplicative monoid of the polynomial ring $\qq[X]$ is factorial and, in particular, a BF-monoid; however, its submonoid $\zz^\bullet + \qq[X]$ is not atomic (see \cite[Example~3]{sC14}) Finally, an atomic positive monoid that fails to be a BF-monoid might not be hereditarily atomic. The next example sheds light upon this.
	
	\begin{example}
		Let $\{p_n\}$ be a strictly increasing sequence comprising the odd prime numbers. Consider the Puiseux monoid of $\qq$
		\[
			P = \langle A \rangle, \text{ where } A = \bigg\{ \frac 1{2^np_n} \ \bigg{|} \ n \in \nn \bigg\}.
		\]
		Since each odd prime divides exactly one element of the set $\mathsf{d}(A)$, it follows that $\mathcal{A}(P) = A$. Thus, $P$ is atomic. Moreover, the fact that $1$ is the sum of $2^np_n$ copies of the atom $1/(2^np_n)$ for every $n \in \nn$ implies that $P$ is not a BF-monoid. On the other hand, the element $1/2^n$ is the sum of $p_n$ copies of the atom $1/(2^np_n)$ for every $n \in \nn$ and, therefore, the antimatter monoid $\langle 1/2^n \mid n \in \nn \rangle$ is a submonoid of $P$. Hence $P$ fails to be hereditarily atomic.
	\end{example}
	
	Combining Theorem~\ref{thm:increasing PM of Archimedean fields are FF} and Proposition~\ref{prop:BF implies hereditarily atomicity} we immediately obtain the next result.
	
	\begin{cor} \label{cor:strongly increasing PM are hereditarily atomic}
		Every increasing positive monoid of an ordered field is hereditarily atomic.
	\end{cor}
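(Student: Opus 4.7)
The plan is to observe that this corollary falls out by concatenating the two major results that precede it, so essentially no new work is required. First I would let $P$ be an increasing positive monoid of an ambient ordered field $K$, and apply Theorem~\ref{thm:increasing PM are BF-monoids} to conclude that $P$ is an FF-monoid.

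Next I would pass from FF to BF. The point is that for every $x \in P$, the length set $\mathsf{L}(x) = \{|z| : z \in \mathsf{Z}(x)\}$ is the image of $\mathsf{Z}(x)$ under the length map $|\cdot|$, so $\mathsf{Z}(x)$ finite forces $\mathsf{L}(x)$ finite. Hence every FF-monoid is a BF-monoid, and therefore $P$ is a positive BF-monoid of $K$.

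Finally I would invoke Proposition~\ref{prop:BF implies hereditarily atomicity}, which tells us that every positive BF-monoid of an ordered field is hereditarily atomic. Applied to $P$, this yields that every submonoid of $P$ is atomic, which is exactly the definition of $P$ being hereditarily atomic. Since there is no genuinely hard step here — the entire argument is a two-line invocation of Theorem~\ref{thm:increasing PM are BF-monoids} together with Proposition~\ref{prop:BF implies hereditarily atomicity}, with the elementary FF $\Rightarrow$ BF implication bridging them — there is no main obstacle to anticipate, and the proof should be written as a brief remark referencing the two results.
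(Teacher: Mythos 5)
Your proposal is correct and matches the paper's own (one-line) argument exactly: combine Theorem~\ref{thm:increasing PM are BF-monoids} with Proposition~\ref{prop:BF implies hereditarily atomicity}, using the elementary fact that every FF-monoid is a BF-monoid since $\mathsf{L}(x)$ is the image of $\mathsf{Z}(x)$ under the length map. Nothing further is needed.
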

	
	\section*{Acknowledgments}
	
	While working on this paper, the author was supported by the UC Berkeley Chancellor Fellowship. The author is happy to thank the anonymous referee for the careful reading and valuable suggestions.


\begin{thebibliography}{20}
		
		\bibitem{ACHP07} J.~Amos, S.~T.~Chapman, N.~Hine, and J.~Paixao: \emph{Sets of lengths do not characterize numerical monoids}, Integers {\bf 7} (2007) Paper A50, 8pp.
		
		\bibitem{sC14} S.~T.~Chapman: \emph{A tale of two monoids: A friendly introduction to nonunique factorizations}, Math. Mag. {\bf 87} (2014) 163--173.
		
		\bibitem{CCMMP14} S.~T.~Chapman, M.~Corrales, A.~Miller, C.~Miller, and D.~Patel: \emph{The catenary and tame degrees on a numerical monoid are eventually periodic}, J. Aust. Math. Soc. {\bf 97} (2014) 289--300.
		
		\bibitem{CGP14} S.~T.~Chapman, F.~Gotti, and R.~Pelayo: \emph{On delta sets and their realizable subsets in Krull monoids with cyclic class groups}, Colloq. Math. {\bf 137} (2014) 137--146.
		
		\bibitem{CDM99} J.~Coykendall, D.~E.~Dobbs, and B.~Mullins: \emph{On integral domains with no atoms}, Comm. Algebra {\bf 27} (1999) 5813--5831.
		
		\bibitem{DW96} H.~G.~Dales and W.~H.~Woodin: \emph{Super-Real Fields: Totally Ordered Fields with Additional Structure}, Oxford University Press, New York, 1996.
		
		\bibitem{DT14} M.~Deveau and H.~Teismann, \emph{72 + 42: Characterizations of the completeness and Archimedean properties of ordered fields}, Real Anal. Exchange {\bf 39} (2014) 261--304.
		
		\bibitem{dD00} D.~E.~Dobbs: \emph{When is an ordered field a metric space?}, Tsukuba J. Math. {\bf 24} (2000) 325--336.
		
		\bibitem{GR99} P.~A.~Garc\'ia-S\'anchez and J.~C.~Rosales: \emph{Finitely Generated Commutative Monoids}, Nova Science Publishers Inc., New York, 1999.
		
		\bibitem{GR09} P.~A.~Garc\'ia-S\'anchez and J.~C.~Rosales: \emph{Numerical Semigroups}, Developments in Mathematics, 20, Springer-Verlag, New York, 2009.
		
		\bibitem{GH06} A.~Geroldinger, F.~Halter-Koch: \emph{Non-unique factorizations: A survey}, in Multiplicative Ideal Theory in Commutative Algebra, ed. by J.~W.~Brewer, S.~Glaz, W.~Heinzer, B.~Olberding (Springer, New York, 2006) 207--226.
		
		\bibitem{GH06a} A.~Geroldinger and F.~Halter-Koch: \emph{Non-Unique Factorizations: Algebraic, Combinatorial and Analytic Theory}, Pure and Applied Mathematics, vol.~278, Chapman \& Hall/CRC, Boca Raton, 2006.
		
		 \bibitem{GS16} A.~Geroldinger and W.~A.~Schmid: \emph{The system of sets of lengths in Krull monoids under set addition}, Rev. Mat. Iberoam. {\bf 32} (2016) 571--588.
		
		\bibitem{fG16} F.~Gotti: \emph{On the atomic structure of Puiseux monoids}, J. Algebra Appl. {\bf 16} (2017) No. 07, 1750126.
		
		\bibitem{GG16} F.~Gotti and M.~Gotti: \emph{Atomicity and boundedness of monotone Puiseux monoids}, Semigroup Forum {\bf 96} (2018) 536--552.
		
		\bibitem{pG01} P.~A.~Grillet: \emph{Commutative Semigroups}, Advances in Mathematics, vol.~2, Kluwer Academic Publishers, Boston, 2001.
		
		\bibitem{HT15} J.~F.~Hall and T.~D.~Todorov \emph{Ordered fields, the purge of infinitesimals from mathematics and the rigorousness of infinitesimal calculus}, Bulg. J. Phys. {\bf 42} (2015) 99--127.
		
		\bibitem{sL02} S.~Lang: \emph{Algebra}, Graduate Texts in Mathematics, vol. 211, Springer-Verlag, New York, 2002.
		
		\bibitem{yT12} Y.~Tanaka: \emph{Topology on ordered fields}, Comment. Math. Univ. Carolin. {\bf 53} (2012) 139--147.
		
	\end{thebibliography}
\end{document}